\long\def\blue#1{\textcolor {blue}{#1}}
\newtheorem{theorem}{Theorem}[section]
\newtheorem{lemma}[theorem]{Lemma}
\newtheorem{proposition}[theorem]{Proposition}
\newtheorem{corollary}[theorem]{Corollary}
\theoremstyle{definition}
\newtheorem{definition}[theorem]{Definition}
\newtheorem{problem}[theorem]{Problem}
\theoremstyle{remark}
\newtheorem{remark}[theorem]{Remark}
\numberwithin{equation}{section}
\newcommand{\D}{\ensuremath{\mathbb D}}
\newcommand{\K}{\ensuremath{\mathbb K}}
\newcommand{\T}{\ensuremath{\mathbb T}}
\newcommand{\R}{\ensuremath{\mathbb R}}
\newcommand{\Z}{\ensuremath{\mathbb Z}}
\newcommand{\C}{\ensuremath{\mathbb C}}
\newcommand{\N}{\ensuremath{\mathbb N}}
\newcommand{\pss}[2]{\ensuremath{{\langle #1,#2\rangle}}}
  \newcommand{\norme}[1]{\left\| #1 \right\|}
  \newcommand{\abs}[1]{\left\vert #1 \right\vert}
  \newcommand{\ps}[2]{\left\langle #1,#2 \right\rangle}
\newcommand{\eps}{\varepsilon}
\newcommand{\ld}{\lambda}
  \newcommand{\B}[1]{\mathcal{B}(#1)}
\newcommand{\Jam}{Jamison}
\newcommand{\nk}{\ensuremath{(n_k)_{k\ge 0}\,}}
\renewcommand{\Re}{\operatorname{Re}}
\newcommand{\Arg}{\operatorname{Arg}}
\begin{document}
\linespread{1.1}\selectfont{}

\title[Escaping a neighborhood]{Escaping a neighborhood along a prescribed sequence in Lie groups and Banach algebras}

\author[Badea]{Catalin Badea}
\address[C.~Badea]{Univ. Lille, CNRS, UMR 8524 - Laboratoire Paul Painlev\'{e}, France}
\email{catalin.badea@univ-lille.fr}
\urladdr{http://math.univ-lille1.fr/~badea/}

\author[Devinck]{Vincent Devinck}
\address[V.~Devinck]{Univ. d'Artois, Laboratoire de Math\'{e}matiques de Lens, FR CNRS 2956, France}
\email{devinck@math.cnrs.fr}

\author[Grivaux]{Sophie Grivaux}
\address[S.~Grivaux]{CNRS, Univ. Lille, UMR 8524 - Laboratoire Paul Painlev\'{e}, France}
\email{sophie.grivaux@univ-lille.fr}
\urladdr{http://math.univ-lille1.fr/~grivaux/}
\keywords{Jamison sequences, point spectrum, iterates, accretive operators, Banach algebras, Lie groups, groups with no small subgroups, minimal metric}
 \subjclass[2010]{47A10, 47A12, 47A60, 22E15}
 \thanks{This work was supported in part by
 the project FRONT of the French
National Research Agency (grant ANR-17-CE40-0021) and by the Labex CEMPI (ANR-11-LABX-0007-01).}

\onehalfspace

\begin{abstract}
It is shown that Jamison sequences, introduced in 
$2007$ by Badea and Grivaux,
arise naturally in the study of topological groups with no small subgroups, of Banach or normed algebra elements whose powers are close to identity along subsequences, and in characterizations of (self-adjoint) positive operators by the accretiveness of some of their powers. The common core of these results is a description of those sequences for which non-identity elements in Lie groups or normed algebras escape an arbitrary small neighborhood of the identity in a number of steps belonging to the given sequence. Several spectral characterizations of Jamison sequences are given and other related results are proved.
\end{abstract}

\maketitle

\section{Introduction}\label{Sec0}
\subsection{Jamison sequences} 
The main characters of this manuscript are the so-called \emph{Jamison sequences} of integers. This notion was introduced in the paper \cite{BaGr}, following the original work of Jamison in \cite{Jam} and the subsequent extensions of Ransford in \cite{Ran} and Ransford and Roginskaya in \cite{RR}. It is part of the general study of the relationships between the geometry of a (complex, separable) Banach space $X$, the growth of the iterates $T^{n}$ of a bounded operator $T\in\mathcal{B}(X)$, and the size of (parts of) its spectrum. More precisely, the following definition was introduced in \cite{BaGr}.

\begin{definition}[Jamison sequences]\label{def:jam}
A sequence of integers $(n_k)_{k\ge 0}$  with $n_0 = 1$ is said to be a \emph{Jamison sequence} if the following spectral property holds: for any bounded
linear operator $T$ on a complex separable Banach space
such that $\sup_{k\geq 0}\|T^{n_{k}}\|<+\infty $, the set of eigenvalues of modulus $1$ of $T$ is countable.
\end{definition}

Jamison \cite{Jam} proved the result that the set of eigenvalues of modulus $1$ of a power-bounded operator on a complex separable Banach space is countable. This can be formulated as ``$n_k=k+1$ is a Jamison sequence'', whence the terminology used in Definition~\ref{def:jam}. We also know (see for example \cite{BaGrsmf}) that the sequence $n_k = 2^k$ is a Jamison sequence. More generally, sequences with \emph{bounded quotients}, that is sequences $(n_k)$ with $n_0 = 1$ and 
$\sup_{k\ge 0} \frac{n_{k+1}}{n_k} \le c < +\infty$ are Jamison sequences. We refer to Subsection \ref{Sec2} below for more information and for other examples of Jamison sequences. See also \cite{BaGrsmf} and the references therein for a recent survey of results concerning Jamison sequences and related notions, as well as for many examples of Jamison sequences. On the other hand, the sequences given by $n_k = k!$ or by $n_0=1$ and $n_k = 2^{2^k}$ for $k\ge 1$ are not Jamison sequences. 

The following characterization has been obtained by two authors of the present paper in \cite{BaGr2}. 

\begin{theorem}[characterizing \Jam\ sequences; \cite{BaGr2}]
\label{thm:1.2}
A sequence of integers $(n_k)_{k\ge 0}$  with $n_0 = 1$ is a Jamison sequence if and only if there exists $\eps > 0$ such that  for every $\lambda \in\T\setminus\{1\}$,
\begin{equation}\label{eq:sup}
\sup_{k\ge 0}|\lambda ^{n_{k}}-1|\ge \varepsilon.
\end{equation}
\end{theorem}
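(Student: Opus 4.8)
The statement asserts the equivalence of two properties, which I would establish separately. Write $(\star)$ for the condition that \eqref{eq:sup} holds with some $\varepsilon>0$ for every $\lambda\in\T\setminus\{1\}$; the implication ``$(\star)\Rightarrow(n_k)$ is Jamison'' is a soft functional-analytic argument, whereas the converse requires an operator construction and is the hard part. For the first implication, fix $\varepsilon$ as in $(\star)$, let $T\in\B{X}$ with $M:=\sup_{k}\|T^{n_k}\|<+\infty$, and suppose $\lambda\ne\mu$ are eigenvalues of $T$ of modulus $1$, with unit eigenvectors $u$ and $v$. From the identity
\[
(\lambda^{n_k}-\mu^{n_k})\,v \;=\; T^{n_k}(u-v)-\lambda^{n_k}(u-v)
\]
and $|\lambda^{n_k}|=\|v\|=1$ one gets $|\lambda^{n_k}-\mu^{n_k}|\le(M+1)\|u-v\|$ for every $k$; since $|\lambda^{n_k}-\mu^{n_k}|=|(\lambda\mu^{-1})^{n_k}-1|$ and $\lambda\mu^{-1}\in\T\setminus\{1\}$, property $(\star)$ yields $\|u-v\|\ge\varepsilon/(M+1)$. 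Thus, choosing one unit eigenvector for each eigenvalue of $T$ of modulus $1$ produces an $\varepsilon/(M+1)$-separated family of vectors in the separable space $X$, which is necessarily countable; hence the peripheral point spectrum of $T$ is countable, and $(n_k)$ is a Jamison sequence.

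For the converse I would argue by contraposition: assuming $(\star)$ fails, I would build an operator violating Definition~\ref{def:jam}. Negating $(\star)$ gives, for every $\delta>0$, some $\lambda\in\T\setminus\{1\}$ with $\sup_{k}|\lambda^{n_k}-1|<\delta$; since $n_0=1$ this forces in particular $|\lambda-1|<\delta$. Hence one may select $(\lambda_j)_{j\ge1}$ in $\T\setminus\{1\}$ with $\lambda_j\to1$ and with $\eta_j:=\sup_{k}|\lambda_j^{n_k}-1|$ decaying as rapidly as desired (in particular with $\sum_j\eta_j<+\infty$); passing to a subsequence, one may further take the $\lambda_j$ ``multiplicatively independent'', so that the products $\prod_{j\in A}\lambda_j$ over subsets $A\subseteq\N$ form an uncountable (Cantor-type) subset $K$ of $\T$, and every $\mu\in K$ then satisfies $\sup_{k}|\mu^{n_k}-1|\le e^{\sum_j\eta_j}-1<+\infty$. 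The plan is then to construct, on a separable Hilbert space, a weighted-shift-type operator $T$ --- for instance a norm-one diagonal operator perturbed by a weighted backward shift, with the diagonal entries and the weights tailored to the $\lambda_j$ and the $\eta_j$ --- such that the standard recursion for eigenvectors shows every $\mu\in K$ (or at least an uncountable subset) to be an eigenvalue of $T$, while $\sup_{k}\|T^{n_k}\|<+\infty$. Such a $T$ shows that $(n_k)$ is not a Jamison sequence.

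I expect the main obstacle to be exactly the estimate $\sup_{k}\|T^{n_k}\|<+\infty$ in this construction. Expanding $T^{n_k}$ produces binomial-type sums in which the weighted-shift part occurs with coefficients of order $\binom{n_k}{r}$, and one must choose the weights to decay fast enough --- relative to the growth of $(n_k)$ --- to keep these sums bounded uniformly in $k$, while keeping the perturbation large enough that the point spectrum genuinely stays uncountable (too small a perturbation leaves only the countable spectrum of the diagonal part). It is precisely the fast decay of the $\eta_j$, equivalently the near-triviality $\lambda_j^{n_k}\approx1$ for all $k$, that should make this balance possible; and, as the first part of the argument shows, it is the very existence of such $\lambda_j$ that $(\star)$ rules out. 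By contrast, the implication ``$(\star)\Rightarrow$ Jamison'' is routine once the displayed identity above is noticed.
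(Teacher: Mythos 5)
Your first implication is complete and correct, and it is the standard argument: the identity $(\lambda^{n_k}-\mu^{n_k})v=T^{n_k}(u-v)-\lambda^{n_k}(u-v)$ gives the separation $\|u-v\|\ge\eps/(M+1)$ between unit eigenvectors attached to distinct unimodular eigenvalues, and separability of $X$ then forces countability. Note that the paper itself does not reprove Theorem~\ref{thm:1.2}; it imports it from \cite{BaGr2}, and the operator construction underlying the converse is only recalled (via \cite{EiGr}) in the proof of Theorem~\ref{thm:spectral}.

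The converse direction of your proposal is, however, only a plan, and the mechanism you describe for the crucial estimate $\sup_k\|T^{n_k}\|<+\infty$ would not work as stated. With $T=D+B$ ($D$ diagonal with entries $\lambda_m$, $B$ a weighted backward shift with fixed weights $\alpha_m$), the entry of $T^{n}$ on the $r$-th superdiagonal is $\alpha_{m-r}\cdots\alpha_{m-1}$ times the divided difference $[\lambda_{m-r},\dots,\lambda_m]z^{n}$; already for $r=1$ this equals $\alpha_{m-1}(\lambda_{m-1}^{n_k}-\lambda_m^{n_k})/(\lambda_{m-1}-\lambda_m)$, and the naive bound by $n_k\,\alpha_{m-1}$ (your ``$\binom{n_k}{r}$'' count) tends to infinity with $k$ no matter how fast the \emph{fixed} weights decay. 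Boundedness must come from cancellation, i.e.\ from $|\lambda_i^{n_k}-\lambda_j^{n_k}|$ being uniformly small in $k$; equivalently, the diagonal entries must be chosen close to one another in the metric $d_{(n_k)}(\lambda,\mu)=\sup_k|\lambda^{n_k}-\mu^{n_k}|$. This is the genuinely missing ingredient: one needs not merely an uncountable set $K\subset\T$ with $\sup_{\mu\in K}\sup_k|\mu^{n_k}-1|<+\infty$ (which your product construction $\prod_{j\in A}\lambda_j$ does produce), but an uncountable \emph{perfect} set $K$ that is \emph{separable for $d_{(n_k)}$}, so that a $d_{(n_k)}$-dense sequence $(\lambda_m)$ in $K$ can be enumerated with $d_{(n_k)}(\lambda_m,\lambda_{m+1})$ controlling the weights $\alpha_m$, and so that the eigenvector recursion for each $\mu\in K$ yields a vector in $\ell_2$. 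Establishing that failure of \eqref{eq:sup} produces such a set is precisely the dichotomy proved in \cite{BaGr2} (compare Theorem~\ref{thm:1.6}), and it is the heart of the converse; your sketch stops short of it.
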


It will be helpful in this paper to pass from the (qualitative) definition of Jamison sequences to the (quantitative) definition of Jamison pairs. 

\begin{definition}[Jamison pairs and Jamison constants]
\label{def:jam1}
Given a sequence $(n_k)_{k\ge 0}$ of integers with $n_0 = 1$ and a real number $\eps > 0$, we say that $(\nk,\eps)$ is a \emph{Jamison pair} whenever \eqref{eq:sup} holds
for every $\lambda \in\T\setminus \{1\}$.
If \nk is a Jamison sequence, the supremum of all $\varepsilon$'s such that $(\nk,\eps)$ is a Jamison pair is called the \emph{Jamison constant} of $\nk$.
\end{definition} 

The above condition \eqref{eq:sup} can be interpreted as a Diophantine approximation condition. Indeed, using 
the fact that the quantities
$ \left|e^{i2\pi n\alpha}-1\right|$ and  $\langle\langle n\alpha\rangle\rangle$, $\alpha \in \R$, are comparable, 
where $\langle\langle \cdot\rangle\rangle$ denotes the distance to the closest integer, one can interpret \eqref{eq:sup} for $\ld = e^{i2\pi\alpha}$ as the impossibility to well approximate $\alpha$ by rationals with prescribed denominators from the sequence \nk. An analogue of this Diophantine approximation condition also has been used in \cite{Dev} by one of the authors to give a version of Theorem~\ref{thm:1.2} for $C_0$-semigroups. See also Section~\ref{sect:inR} for more on Jamison sequences in $\R$. 

The same condition  \eqref{eq:sup} has the following dynamical interpretation in terms of nontrivial circle rotations $R_\lambda\,:\,\T\ni z\mapsto \lambda z \in \T$. If $\nk$ is a Jamison sequence, then there exists a neighborhood $V$ of $1$ in $\T$ such that $R_\lambda^{n_k}(1)$ ``escapes $V$'', that is $R_\lambda^{n_k}(1) \notin V$ for some $k\ge 0$.

\subsection{What this paper is about}
The aim of the present paper is to obtain some surprisingly general characterizations of Jamison sequences and Jamison pairs. We summarize the main results obtained in this paper as follows.
\begin{enumerate}
\item A sequence $(n_k)$ is Jamison if and only if, for any Lie group $G$, there is a neighborhood $U$ of the identity element $e$ in $G$ such that for any $g\neq e$ in $G$, the sequence $(g^{n_k})$ ``escapes $U$'', that is $g^{n_k} \notin U$ for some $k\ge 0$. The same holds true for any linear Banach-Lie group. 

\item If \nk has bounded quotients, then it satisfies the same property as in (1) for any Banach-Lie group $G$, and for any topological group admitting a \emph{minimal metric} (a notion introduced recently by Rosendal \cite{Rosendal}).

\item A pair $((n_k),\eps)$ is Jamison with $\eps\le1$ if and only if for any complex normed algebra $A$ with identity $e$ and with any $a\neq e$ in $A$, the sequence $(a^{n_k})$ escapes the ball $B(e,r)$ for every $r<\eps$.

\item The pair $((n_k),\sqrt{2})$ is Jamison if and only if any Hilbert space operator $T$ such that 
$\Re T^{n_k} > 0$ for all $k\ge 0$ is a positive invertible operator. The pair $((n_k),\sqrt{2})$ is a ``strict'' Jamison pair (i.e.\, strict inequality holds in \eqref{eq:sup}), if and only if any Hilbert space operator $T$ such that 
$\Re T^{n_k} \ge 0$ for all $k\ge 0$ is a positive operator. 
\end{enumerate}

As explained in Section~\ref{Sec1} below, these results are generalizations of classical results obtained by Got\^o and Yamabe (for Lie groups), Chernoff (for normed algebras) and Shiu (for positive operators) in the case $n_k := 2^k$.

\subsection{Organization of the paper}
We end this introduction by presenting some examples of Jamison sequences, including a sharp estimate of the Jamison constant of a sequence with bounded quotients. In the next section we state the main results of this paper and the background behind them. Section~\ref{Sec3} is devoted to the study of sequences verifying an analogue of the ``escape property'' in normed algebras: we prove there Theorem~\ref{thm:BanAlg} and state some consequences. 
The characterization of sequences verifying an ``escape property'' for Lie groups (Theorem~\ref{Thm:Lie1}) is proved in Section~\ref{Sec4}. Section~\ref{Sec5} is devoted to the proof of Theorem~\ref{thm:Lie2}, while the results of Subsection~\ref{subsec:1.5} below about sequences characterizing positive operators are proved in the final Section~\ref{Sec6}.

\subsection{Jamison sequences: some examples}\label{Sec2}
 So as not to interrupt the flow of the presentation, we have postponed to this subsection our discussion of some examples of Jamison sequences. The reader impatient to know the statement of the main results of this paper can go directly to Section~\ref{Sec1}.

The first class of examples of Jamison sequences that we consider here is the class of sequences with bounded quotients. The fact that such sequences are Jamison was proved in \cite{RR}*{Theorem\,1.5}. Using Theorem~\ref{thm:1.2} we give here a quick proof of this result, including an estimate of the Jamison constant.

\begin{proposition}[Sequences with bounded quotients are Jamison]\label{prop:bddq}
Let \nk be a sequence of integers with $n_0 = 1$ and 
$$\sup_{k\ge 0} \frac{n_{k+1}}{n_k} \le c < +\infty .$$ Then 
$(\nk, 2\sin(\pi/(c+1)))$ is a Jamison pair.
\end{proposition}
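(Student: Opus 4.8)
The plan is to rely on the characterization of Jamison sequences in Theorem~\ref{thm:1.2}, so that it suffices to show
\[
\sup_{k\ge 0}\bigl|\lambda^{n_{k}}-1\bigr|\ \ge\ 2\sin\!\Bigl(\frac{\pi}{c+1}\Bigr)\qquad\text{for every }\lambda\in\T\setminus\{1\}.
\]
Writing $\lambda=e^{2i\pi\alpha}$ with $\alpha\in(0,1)$ and using the elementary identities $\bigl|e^{2i\pi t}-1\bigr|=2\bigl|\sin(\pi t)\bigr|=2\sin\bigl(\pi\langle\langle t\rangle\rangle\bigr)$, together with the fact that $s\mapsto\sin(\pi s)$ is increasing on $[0,1/2]$ and that $1/(c+1)\le 1/2$, this is equivalent to the purely Diophantine assertion
\[
\sup_{k\ge 0}\langle\langle n_{k}\alpha\rangle\rangle\ \ge\ \frac1{c+1}.
\]

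To prove this I would argue by contradiction: assume $\delta:=\sup_{k\ge 0}\langle\langle n_{k}\alpha\rangle\rangle<1/(c+1)$. Since $\langle\langle n_{k}(1-\alpha)\rangle\rangle=\langle\langle n_{k}\alpha\rangle\rangle$ for every $k$, and since $n_{0}=1$ forces $\langle\langle\alpha\rangle\rangle\le\delta$, after possibly replacing $\alpha$ by $1-\alpha$ we may assume $0<\alpha\le\delta<1/2$. As $(n_{k})$ is strictly increasing, $n_{k}\alpha\to+\infty$ while $n_{0}\alpha=\alpha<1/2$, so there is a least index $\ell\ge 1$ with $n_{\ell}\alpha>1/2$. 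By minimality $n_{\ell-1}\alpha\le 1/2$, and since $0<n_{\ell-1}\alpha\le 1/2$ this gives $\langle\langle n_{\ell-1}\alpha\rangle\rangle=n_{\ell-1}\alpha$, hence $n_{\ell-1}\alpha\le\delta$. On the other hand, by the bounded-quotient hypothesis,
\[
n_{\ell}\alpha\ =\ \frac{n_{\ell}}{n_{\ell-1}}\,n_{\ell-1}\alpha\ \le\ c\,\delta\ <\ \frac{c}{c+1}\ <\ 1 ,
\]
so $n_{\ell}\alpha\in(1/2,1)$; therefore the integer closest to $n_{\ell}\alpha$ is $1$, and $1-n_{\ell}\alpha=\langle\langle n_{\ell}\alpha\rangle\rangle\le\delta$, that is $n_{\ell}\alpha\ge 1-\delta$.

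Dividing the two estimates, one obtains
\[
c\ \ge\ \frac{n_{\ell}}{n_{\ell-1}}\ =\ \frac{n_{\ell}\alpha}{n_{\ell-1}\alpha}\ \ge\ \frac{1-\delta}{\delta}\, ,
\]
i.e.\ $\delta(c+1)\ge 1$, contradicting $\delta<1/(c+1)$; this yields the required inequality and hence the proposition. I expect the only subtle point to be the choice of the threshold index $\ell$: the argument works precisely because at that index one controls \emph{simultaneously} $n_{\ell-1}\alpha$ (small, as it has not yet crossed $1/2$) and $1-n_{\ell}\alpha$ (small, as $n_{\ell}\alpha$ overshoots $1/2$ but stays strictly below $1$), so that the single quotient $n_{\ell}/n_{\ell-1}\le c$ already forces $\delta\ge 1/(c+1)$; a cruder estimate relying only on $n_{\ell}\alpha>1/2$ would give the weaker bound $2\sin(\pi/(2c))$. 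Sharpness of the constant $2\sin(\pi/(c+1))$ is seen, for $c$ an integer, by taking $n_{k}=c^{k}$ and $\alpha=1/(c+1)$, for which $\langle\langle c^{k}\alpha\rangle\rangle=1/(c+1)$ for all $k$ since $c\equiv-1\pmod{c+1}$.
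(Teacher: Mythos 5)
Your proof is correct and follows essentially the same route as the paper's: both reduce to Theorem~\ref{thm:1.2} and run a first-crossing argument by contradiction, applying the bounded-quotient hypothesis exactly once at the index where $n_k\alpha$ (resp.\ $n_k\theta$) first passes a threshold. The only difference is cosmetic: the paper crosses the threshold $2\pi/(c+1)$ and concludes directly that $n_{j+1}\theta$ lands in the arc $[2\pi/(c+1),\,2\pi-2\pi/(c+1)]$ where $2\sin(\cdot/2)\ge 2\sin(\pi/(c+1))$, whereas you cross $1/2$ in the Diophantine formulation and extract the contradiction from the ratio bound $(1-\delta)/\delta\le c$.
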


The constant $2\sin(\pi/(c+1))$ from Proposition \ref{prop:bddq} is sharp, as the example of the sequence given by $n_k = c^k$, for a positive integer $c$, shows. Indeed, for $\mu = e^{2i\pi/(c+1)}$ we have
$$\sup_{k\geq 0}\left|\mu^{n_k}-1\right| = \sup_{k\geq 0}\left|e^{2ic^k\pi/(c+1)}-1\right| = 2\sin\left(\frac{\pi}{c+1}\right).$$
In particular, for the sequence $n_k=2^k$ of powers of $2$, the Jamison constant is exactly $\sqrt{3}=2\sin(\pi/3)$. The same value $\sqrt{3}$ is the Jamison constant of the sequence $n_k=k+1$ (see \cite{BaGrsmf}*{Exemple\,2.11}). This is one possible explanation for the fact that several results described below, known for the sequence given by $n_k = k+1$, are valid also for the sequence of powers of $2$.  

 \begin{proof}[Proof of Proposition \ref{prop:bddq}]
 Suppose that $n_0 = 1$ and $n_{k+1} \le cn_k$  if $k\ge 0$. Suppose that $\ld = e^{i\theta}$, with $\abs{\theta} \le \pi$, satisfies $\abs{\ld^{n_k}-1} \le \eps < 2\sin(\pi/(c+1))$ for every $k\ge 0$. For the sake of contradiction, suppose that $\ld \neq 1$. Without loss of generality we can assume that $\theta \in ]0,\pi]$. Since $n_0=1$, we have $2\sin(\theta/2) = \abs{e^{i\theta}-1} < 2\sin(\pi/(c+1))$. Thus $0 < \theta < 2\pi/(c+1)$. Let $j$ be the smallest positive integer such that $n_{j+1}\theta \ge 2\pi/(c+1)$. Then 
 $$\frac{2\pi}{c+1} \le n_{j+1}\theta \le cn_j\theta \le 2\pi - \frac{2\pi}{c+1}$$
 and thus $\abs{\ld^{n_{j+1}}-1} = 2\sin\left(n_{j+1}\theta/2\right)\ge 2\sin(\pi/(c+1))$, a contradiction. Therefore $\theta=0$. It follows from Theorem~\ref{thm:1.2} that $(\nk, 2\sin(\pi/(c+1)))$ is a Jamison pair.
 \end{proof}

The next class of Jamison sequences, considered in \cite{BaGr2}, shows that not only the growth of the sequence matters, but also its arithmetical properties. 
Recall that  a set $\Sigma = \{\sigma_k : k\ge 0\}$ of real numbers is said to be
 \emph{dense modulo} $1$ if the set
$\Sigma + \Z = \{\sigma_k + n : k \ge 0, \, n\in \Z\}$
is dense in $\R$. For any $\eta > 0$, the set $\Sigma$ is said
to be $\eta$-\emph{dense modulo} $1$ if the set
$\Sigma+\Z$ intersects every open sub-interval of $\R$ of length greater than $ \eta$.

\begin{proposition}[arithmetic Jamison sequences; \cite{BaGr2}]\label{prop2}
Let $(n_k)_{k\ge 0}$ be a  sequence of integers with $n_0=1$. If
there exists a number $0<\eta < 1$
such that the set
$$D_{\eta} = \{\theta\in \R : (n_k\theta)_{k\ge 0} \text{ is not } \eta\text{-dense modulo } 1\}$$
is countable, then \nk is a Jamison sequence. In particular, if $(n_k\theta)_{k\ge 0}$ is dense modulo $1$ for every irrational $\theta$, then \nk is a Jamison sequence. 
\end{proposition}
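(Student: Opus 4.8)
The plan is to argue by contraposition, using the quantitative form of Theorem~\ref{thm:1.2}. Writing $\lambda=e^{2i\pi\theta}$, put
$$c(\theta):=\sup_{k\ge 0}\abs{e^{2i\pi n_k\theta}-1}=2\sin\Big(\pi\,\sup_{k\ge 0}\langle\langle n_k\theta\rangle\rangle\Big),$$
the latter equality holding because $\abs{e^{2i\pi x}-1}=2\sin(\pi\langle\langle x\rangle\rangle)$ and $t\mapsto 2\sin(\pi t)$ is continuous and increasing on the range $[0,1/2]$ of $\langle\langle\cdot\rangle\rangle$. The function $c$ is $\Z$-periodic, and by Theorem~\ref{thm:1.2} the sequence $(n_k)$ fails to be a Jamison sequence exactly when $\inf\set{c(\theta):\theta\in(0,1)}=0$. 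So I assume the latter, and aim to prove that $D_\eta$ is uncountable for \emph{every} $\eta\in(0,1)$: this contradicts the hypothesis and hence establishes the proposition.

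The first step localizes the failure of the Jamison property at the origin. Being a supremum of the continuous functions $\theta\mapsto\abs{e^{2i\pi n_k\theta}-1}$, the function $c$ is lower semicontinuous, and $c(\theta)>0$ whenever $\theta\notin\Z$ (since $n_0=1$ gives $c(\theta)\ge\abs{e^{2i\pi\theta}-1}$). Hence, if $(\theta_m)\subseteq(0,1)$ satisfies $c(\theta_m)\to 0$, then any accumulation point $\theta^{*}\in[0,1]$ of $(\theta_m)$ obeys $c(\theta^{*})\le\liminf_m c(\theta_m)=0$, forcing $\theta^{*}\in\{0,1\}$. Fixing such a sequence $(\theta_m)\subseteq(0,1)$ (possible by the standing assumption), then passing to a subsequence converging to some $\theta^{*}\in\{0,1\}$ and replacing each $\theta_m$ by $1-\theta_m$ in case $\theta^{*}=1$ (which changes neither $c$, by conjugation, nor membership in $(0,1)$), I may assume that $\theta_m\to 0$, $\theta_m\in(0,1/2)$ and $c(\theta_m)\to 0$.

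The core of the proof is a Cantor-type construction producing uncountably many elements of $D_\eta$. Put $r_j:=\tfrac1\pi\arcsin\!\big(c(\theta_j)/2\big)$, so that $\langle\langle n_k\theta_j\rangle\rangle\le r_j$ for every $k$, and $r_j\to 0$. Since $\theta_m\to 0$ and $r_m\to 0$ simultaneously, I can extract indices $m_1<m_2<\cdots$ such that the subsequence is \emph{super-decreasing}, $\theta_{m_{l+1}}<\tfrac12\theta_{m_l}$, with in addition $\theta_{m_1}<1/2$ and $\sum_{l\ge 1}r_{m_l}<(1-\eta)/4$. Super-decrease forces $\theta_{m_l}>\sum_{l'>l}\theta_{m_{l'}}$, so the map $\omega\mapsto\Theta_\omega:=\sum_{l\ge 1}\omega_l\theta_{m_l}$ is injective on $\{0,1\}^{\N}$ with values in $[0,1)$; thus $\set{\Theta_\omega:\omega\in\{0,1\}^{\N}}$ is an uncountable set of real numbers. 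Meanwhile, subadditivity of the distance-to-$\Z$ function gives, for every $\omega$ and every $k$,
$$\langle\langle n_k\Theta_\omega\rangle\rangle\ \le\ \sum_{l\ge 1}\omega_l\,\langle\langle n_k\theta_{m_l}\rangle\rangle\ \le\ \sum_{l\ge 1}r_{m_l}\ <\ \frac{1-\eta}{4}\,,$$
so the entire orbit $\set{n_k\Theta_\omega\bmod 1:k\ge 0}$ lies in the open arc of length $(1-\eta)/2$ centered at $0$; consequently it omits an open interval of length $(1+\eta)/2>\eta$, hence is not $\eta$-dense modulo $1$, i.e.\ $\Theta_\omega\in D_\eta$. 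Therefore $D_\eta$ contains the uncountable set $\set{\Theta_\omega:\omega\in\{0,1\}^{\N}}$, the required contradiction, and $(n_k)$ is a Jamison sequence.

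The ``in particular'' clause then follows at once: if $(n_k\theta)_{k\ge 0}$ is dense modulo $1$ for every irrational $\theta$, then $D_\eta\subseteq\Q$ for all $\eta>0$, so $D_{1/2}$ is countable and the statement just proved applies. The only step I expect to require care is the simultaneous bookkeeping in the extraction of $(m_l)$: one must thin the witnesses fast enough for the subsequence to be super-decreasing --- so that distinct $\omega$ give distinct $\Theta_\omega$ --- while keeping the radii $r_{m_l}$ summable with small total --- so that \emph{every} $\Theta_\omega$ has its full $(n_k)$-orbit trapped in one fixed small arc about $0$. Both are possible precisely because $\theta_m\to 0$ and $c(\theta_m)\to 0$ hold together; everything else reduces to the triangle inequality on $\T$ and the lower semicontinuity of a supremum of continuous functions.
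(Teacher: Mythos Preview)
Your argument is correct. The paper takes a different and shorter route: rather than arguing by contraposition from Theorem~\ref{thm:1.2}, it invokes Theorem~\ref{thm:1.6}. Concretely, if $\lambda=e^{2i\pi\theta}$ satisfies $\sup_{k\ge0}\abs{\lambda^{n_k}-1}<\eps$ with $\eps<2\sin\bigl(\pi(1-\eta)/2\bigr)$, then the orbit $(n_k\theta)_{k\ge0}$ modulo $1$ misses an open interval of length greater than $\eta$, so $\theta\in D_\eta$; hence $\Lambda_\eps$ is countable whenever $D_\eta$ is, and Theorem~\ref{thm:1.6} yields the conclusion. Your Cantor-type construction is more self-contained: it relies only on Theorem~\ref{thm:1.2} and in effect reproves the harder implication of Theorem~\ref{thm:1.6} (that countability of some $\Lambda_\eps$ forces the Jamison property) along the way. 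The paper's approach buys brevity by treating Theorem~\ref{thm:1.6} as a black box; yours trades that for an explicit hands-on argument, with the small bonus of showing that failure of the Jamison property makes $D_\eta$ uncountable for \emph{every} $\eta\in(0,1)$, not just for the one given in the hypothesis.
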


It is more difficult to give here an estimate of the Jamison constant of $(n_k)_{k\ge 0}$ since the proof of Proposition \ref{prop2} is based upon a qualitative statement (\cite{BaGr2}*{Corollary\,2.11}): 

\begin{theorem}[a second characterization of \Jam\ sequences; \cite{BaGr2}]
\label{thm:1.6}
A sequence $(n_k)_{k\ge 0}$ of integers with $n_0 = 1$ is a Jamison sequence if and only if there exists an $\varepsilon > 0$ such that the set
$$ \Lambda_{\eps} := \{\ld \in \T : \sup_{k\ge0}\left|\lambda^{n_k} - 1\right| < \eps\}$$
is  countable. 
\end{theorem}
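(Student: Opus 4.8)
The plan is to derive everything from Theorem~\ref{thm:1.2}. One implication is immediate: if $\nk$ is a \Jam\ sequence, Theorem~\ref{thm:1.2} provides $\eps>0$ with $\sup_{k\ge0}|\ld^{n_k}-1|\ge\eps$ for every $\ld\in\T\setminus\{1\}$, and then $\Lambda_\eps=\{1\}$, which is countable. For the converse I would argue by contraposition, showing that if $\nk$ is \emph{not} a \Jam\ sequence then $\Lambda_\eps$ is uncountable for every $\eps>0$.

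To this end, identify $\T$ with $\R/\Z$ via $\theta\mapsto e^{2i\pi\theta}$ and set $g(\theta):=\sup_{k\ge0}\langle\langle n_k\theta\rangle\rangle$, where $\langle\langle\cdot\rangle\rangle$ is the distance to the nearest integer. Since $|e^{2i\pi t}-1|=2\sin(\pi\langle\langle t\rangle\rangle)$ and $t\mapsto2\sin(\pi t)$ is increasing on $[0,1/2]$, one has $\sup_{k}|\ld^{n_k}-1|=2\sin(\pi g(\theta))$ for $\ld=e^{2i\pi\theta}$, so that for $0<\eps<2$ the set $\Lambda_\eps$ corresponds to $\{\theta:g(\theta)<\delta\}$ for a suitable $\delta=\delta(\eps)$, and as $\eps$ ranges over $(0,2)$ the parameter $\delta(\eps)$ ranges over all of $(0,1/2)$; by Theorem~\ref{thm:1.2} the failure of the \Jam\ property means exactly that for every $\delta>0$ there is a $\theta\ne0$ with $g(\theta)<\delta$. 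The two facts about $g$ that I will use are: it is subadditive, $g(\theta+\theta')\le g(\theta)+g(\theta')$, with the same inequality holding for convergent infinite sums because $\langle\langle\cdot\rangle\rangle$ is continuous on $\R/\Z$; and $\langle\langle\theta\rangle\rangle\le g(\theta)$, since $n_0=1$. It then suffices to prove that $\{\theta:g(\theta)<\delta_0\}$ is uncountable for each $\delta_0\in(0,1/2)$, as $\Lambda_\eps$ with $\eps\ge2$ contains such a set.

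So fix $\delta_0\in(0,1/2)$. Using the failure of the \Jam\ property, I would pick recursively $\theta_m\ne0$ in $\R/\Z$, $m\ge1$, whose representative $\tilde\theta_m$ in $(-1/2,1/2)$ can be taken to be positive (replace $\theta_m$ by $-\theta_m$, noting $g(-\theta)=g(\theta)$), in such a way that $g(\theta_m)<3^{-m}\delta_0$ and $\tilde\theta_m<\tilde\theta_{m-1}/3$ for $m\ge2$; both demands are met once $g(\theta_m)$ is chosen small enough, which is possible by hypothesis. Then $\sum_{m\ge1}\tilde\theta_m\le\sum_{m\ge1}g(\theta_m)<\delta_0/2<1/4$, so for $\eta=(\eta_m)_{m\ge1}\in\{0,1\}^{\N}$ the number $s(\eta):=\sum_{m\ge1}\eta_m\tilde\theta_m$ lies in $[0,1/4)$; write $\Phi(\eta)$ for its class in $\R/\Z$. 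On one hand, for every $k$, subadditivity yields $\langle\langle n_k\Phi(\eta)\rangle\rangle\le\sum_{m\ge1}\eta_m\langle\langle n_k\theta_m\rangle\rangle\le\sum_{m\ge1}g(\theta_m)<\delta_0/2$, a bound independent of $k$, whence $g(\Phi(\eta))\le\delta_0/2<\delta_0$. On the other hand $\Phi$ is injective: if $\Phi(\eta)=\Phi(\eta')$ then $s(\eta)\equiv s(\eta')\pmod{\Z}$, hence $s(\eta)=s(\eta')$ as real numbers since both lie in $[0,1/4)$; inspecting the least index $m_0$ where $\eta$ and $\eta'$ differ and using $\tilde\theta_{m_0+j}<3^{-j}\tilde\theta_{m_0}$ gives $|s(\eta)-s(\eta')|>\tilde\theta_{m_0}/2>0$, so $\eta=\eta'$. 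Thus $\{\theta:g(\theta)<\delta_0\}$, and therefore $\Lambda_\eps$, has cardinality at least $2^{\aleph_0}$: contradiction.

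I expect the injectivity of $\Phi$ to be the delicate point. Simply taking any sequence $\ld_m\to1$ in $\T\setminus\{1\}$ with $\sup_k|\ld_m^{n_k}-1|\to0$ and forming all products $\prod_m\ld_m^{\eta_m}$ is not enough, because distinct $\eta$'s may give the same product owing to additive relations among the $\tilde\theta_m$. The remedy is to exploit the \emph{quantitative} failure of the \Jam\ property (Theorem~\ref{thm:1.2}) at each step to force the $\tilde\theta_m$ to decay geometrically; then the dyadic sums $\sum_m\eta_m\tilde\theta_m$ become pairwise distinct and small enough not to wrap around modulo $1$, while subadditivity keeps every $\Phi(\eta)$ inside $\Lambda_\eps$.
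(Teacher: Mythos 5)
Your proof is correct. Both directions check out: the easy direction is an immediate consequence of Theorem~\ref{thm:1.2}, and in the contrapositive direction the reduction from $\sup_k|\lambda^{n_k}-1|$ to $2\sin(\pi g(\theta))$ is legitimate (the sup commutes with the increasing continuous map $t\mapsto 2\sin(\pi t)$ on $[0,1/2]$), the subadditivity of $\langle\langle\cdot\rangle\rangle$ passes to convergent infinite sums by continuity, and the geometric-decay condition $\tilde\theta_m<\tilde\theta_{m-1}/3$ --- enforceable because $\tilde\theta_m\le g(\theta_m)$ thanks to $n_0=1$ --- does give both $g(\Phi(\eta))<\delta_0$ and the injectivity of $\Phi$ via the estimate $|s(\eta)-s(\eta')|>\tilde\theta_{m_0}/2$. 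You correctly identified injectivity as the delicate point and resolved it. For the comparison: the paper does not actually prove this statement; it quotes it as Corollary~2.11 of \cite{BaGr2}, so there is no in-text proof to match yours against. Your argument is a self-contained reconstruction in the spirit of the one in \cite{BaGr2}, where the failure of the Jamison property is used to build a perfect (hence uncountable, indeed Cantor-type) subset of $\Lambda_\eps$ out of products of unimodular numbers that are increasingly close to $1$ in the metric $d_{(n_k)}(\lambda,\mu)=\sup_k|\lambda^{n_k}-\mu^{n_k}|$; you carry out the same construction additively on $\R/\Z$, parametrized by $\{0,1\}^{\N}$, which yields the slightly stronger conclusion that $\Lambda_\eps$ has cardinality $2^{\aleph_0}$ for every $\eps>0$ whenever the sequence is not Jamison.
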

Several other examples of Jamison sequences are presented in 
\cites{RR,BaGr,BaGr2,BaGrsmf}.

\section{Background and main results}\label{Sec1}
We start by describing 
the sequences $(n_{k})_{k\ge 0}$ of positive integers which have the property that given a Lie group $G$ or a normed algebra $A$, powers $x^{n_{k}}$ of an element $x$ different from the identity element $e$ escape some prescribed neighborhood of $e$.

\subsection{NSS sequences in Lie groups}
It is well known that the sequence $n_{k}=k+1$, $k\ge 0$, has this escape property for every Lie group $G$. Indeed, real or complex Lie groups have no small subgroups, that is there exists a neighborhood of the identity element which contains no subgroup other than the trivial one. See for instance \cite{HofmannMorrisEMS}*{Proposition\,2.17} for the classical argument, or the proof of Theorem \ref{thm:Lie2} below. The standard terminology  is that Lie groups are \emph{NSS (No Small Subgroups)}.

As part of the solution, due to Gleason, Montgomery, Yamabe and Zippin, of Hilbert's fifth problem (the topological characterization of locally compact Lie groups), we know that, conversely, locally compact groups with no small subgroups are isomorphic to Lie groups. We refer the reader to the expositions in \cite{MontgomeryZippin} or \cite{Tao5} for this result and related aspects concerning Hilbert's fifth problem.   

It was proved in 1951 by Got\^o and Yamabe \cite{GotoYamabe} that a locally compact group $G$ with no small subgroups (so isomorphic to a Lie group) has the following property: for every $x\neq e$ in a sufficiently small neighborhood $U$ of the identity $e$ in $G$, there exists an integer $k$ such that $x^{2^k} \not\in U$. Note that powers of $2$ plays a special role in the construction of (weak) Gleason metrics (see for instance \cite{Tao5}), a fundamental toolkit in the solution of Hilbert's fifth problem. It is a natural question to ask which sequences of integers can replace the powers of $2$ in the result of Got\^o and Yamabe. 
By \emph{sequence of integers}, we will always mean a strictly increasing sequence $\nk$ of positive integers
with $n_0=1$.
\par\smallskip
In light of the preceding discussion we introduce the following definition:

\begin{definition}[NSS sequences for topological groups]
Let $G$ be a topological group and let \nk  be a sequence of integers with $n_0=1$. We say that $(G, \nk)$ is NSS if there exists a neighborhood $U$ of the identity $e$ of $G$ such that if $g^{n_k} \in U$ for every $k$, then $g=e$. We say that \nk is NSS for a class $\mathcal{C}$ of topological groups if $(G, \nk)$ is NSS for each group $G$ in the class $\mathcal{C}$.
\end{definition}

Our first main result is a characterization of NSS sequences for (real or complex) Lie groups, in surprisingly simple terms. We write $\T=\{\lambda \in\C\,;\,|\lambda |=1\}$.

\begin{theorem}[A characterization of NSS sequences, Lie groups version]\label{Thm:Lie1}
Let \nk be a sequence of integers. The following assertions are equivalent:
 \begin{enumerate}
  \item [(i)] $(n_{k})_{k\ge 0}$ is \emph{NSS} for the class of all Lie groups;
  \item[(ii)] $(\T,(n_{k})_{k\ge 0})$ is \emph{NSS};
\item[(iii)] \nk is a Jamison sequence.
 \end{enumerate}
 \end{theorem}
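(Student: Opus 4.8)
The plan is to prove the cycle of implications $(i)\Rightarrow(ii)\Rightarrow(iii)\Rightarrow(i)$, exploiting Theorem~\ref{thm:1.2} to convert the Jamison property into the quantitative condition~\eqref{eq:sup}.

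The implication $(i)\Rightarrow(ii)$ is immediate: the circle $\T$, equipped with multiplication, is a (compact, abelian) Lie group, so if $(n_k)$ is NSS for \emph{all} Lie groups it is in particular NSS for $\T$.

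For $(ii)\Rightarrow(iii)$ I would argue contrapositively using Theorem~\ref{thm:1.2}. If $(n_k)$ is \emph{not} a Jamison sequence, then for every $\eps>0$ there is some $\lambda\in\T\setminus\{1\}$ with $\sup_{k\ge 0}|\lambda^{n_k}-1|<\eps$. Now let $U$ be any neighborhood of $1$ in $\T$; choose $\eps>0$ small enough that the arc $\{z\in\T:|z-1|<\eps\}$ is contained in $U$, and pick the corresponding $\lambda\neq 1$. Then $\lambda^{n_k}\in U$ for every $k$ while $\lambda\neq 1$, so $(\T,(n_k))$ fails the NSS condition for this $U$. Since $U$ was arbitrary, $(\T,(n_k))$ is not NSS.

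The substantive implication is $(iii)\Rightarrow(i)$, and this is where I expect the real work to lie; presumably the bulk of Section~\ref{Sec4} is devoted to it. Assume $(n_k)$ is Jamison, so by Theorem~\ref{thm:1.2} there is $\eps>0$ such that $\sup_{k\ge 0}|\lambda^{n_k}-1|\ge\eps$ for all $\lambda\in\T\setminus\{1\}$. Given a Lie group $G$, I would pass to its Lie algebra $\mathfrak{g}$ via the exponential map, which is a local diffeomorphism near $0$: fix a norm on $\mathfrak{g}$ and a radius $\rho$ so that $\exp$ is a diffeomorphism from $B_{\mathfrak{g}}(0,\rho)$ onto a neighborhood of $e$, and let $U=\exp(B_{\mathfrak{g}}(0,\rho/2))$, say. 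The point is that for $g=\exp(X)$ with $X$ small, the powers $g^{n}=\exp(nX)$ only as long as $nX$ stays in the domain where $\exp$ is well-behaved; once $n|X|$ grows past $\rho$, one can no longer simply write $g^n=\exp(nX)$, and one must use the group structure. The standard device (as in the Got\^o--Yamabe argument and the construction of Gleason metrics) is: for $g\neq e$ close to $e$, let $N$ be the first time $g^N$ exits a slightly larger ball; then $g, g^2,\dots,g^N$ all lie in that larger ball, and on that ball the ``escape times'' along $(n_k)$ must be controlled. The heart of the matter is to reduce, via the one-parameter subgroup $t\mapsto\exp(tX)$ and comparison with the model case of $\T$ (or a torus), the escape question for $(g^{n_k})$ to the Diophantine condition~\eqref{eq:sup}. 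Concretely, after rescaling so that $g$ is on the ``boundary'' of the relevant neighborhood, the iterates $g^{n_k}$ behave, to first order, like $\exp(n_k X)$, and the condition $\sup_k|\lambda^{n_k}-1|\ge\eps$ on $\T$ translates into a uniform lower bound on $\sup_k \operatorname{dist}(\exp(n_kX),e)$, forcing $g^{n_k}\notin U$ for some $k$ unless $X=0$. The main obstacle, and the reason this needs care, is handling the error terms in the Baker--Campbell--Hausdorff expansion uniformly: one needs the estimate to hold for \emph{all} Lie groups simultaneously (or rather, one needs a single $U$ per group, but the argument must not secretly depend on finite dimensionality in a way that breaks for the linear Banach--Lie case), so one must control the non-abelian corrections to $g^{n}=\exp(nX)$ on a neighborhood whose size depends only on $\eps$ and the local structure constants. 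I would isolate this as a lemma: on a suitable ball, $\operatorname{dist}(g^n, \exp(nX))\le C n^2|X|^2$ or similar, which is negligible compared to the first-order term precisely in the regime where the escape happens, and then combine it with the $\T$-case bound from~\eqref{eq:sup}. The same argument, carried out inside $\mathcal{B}(X)$ or a Banach algebra, handles the linear Banach--Lie group statement.
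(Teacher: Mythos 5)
Your implications (i)$\Rightarrow$(ii) and (ii)$\Rightarrow$(iii) are fine and match the paper (the latter is just Theorem~\ref{thm:1.2}). The problem is (iii)$\Rightarrow$(i), where your sketch rests on two devices that do not work for a general Jamison sequence. First, there are no Baker--Campbell--Hausdorff error terms to control: for $g=\exp(X)$ one has $g^{n}=\exp(nX)$ \emph{exactly}, since $t\mapsto\exp(tX)$ is a one-parameter subgroup. The genuine obstruction is elsewhere: $\exp$ is not injective in the large, so $\exp(n_kX)\in U$ does not force $n_kX$ to be small. The set $\{\exp(n_kX)\}$ lives in the closure of the one-parameter subgroup, which is isomorphic to some $\R^a\times\T^b$, and whether it recurs to $U$ is a Diophantine question relative to a lattice that \emph{varies with} $X$; your sketch offers no way to make the resulting lower bound uniform over all $g\in U\setminus\{e\}$. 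Second, the ``first escape time $N$'' trapping device (let $N$ be the first index with $g^N$ outside a larger ball, so $g,\dots,g^N$ stay in it) only works when consecutive terms $n_k,n_{k+1}$ are comparable: for a general Jamison sequence the quotients $n_{k+1}/n_k$ are unbounded, nothing controls $g^n$ for $n$ strictly between $n_k$ and $n_{k+1}$, and the element can leave the ball and return before the next prescribed exponent. This is precisely why the paper reserves that argument for Theorem~\ref{thm:Lie2} (bounded quotients) and does something different here.

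The paper's actual proof of (iii)$\Rightarrow$(i) avoids both issues by a structural reduction. It first proves Theorem~\ref{thm:BanAlg} by holomorphic functional calculus: if $\sup_k\|a^{n_k}-e\|<\eps$ then $\sigma(a)=\{1\}$, so $\ln(a^{n_k})=n_k\ln(a)$ is uniformly bounded, forcing $\ln a=0$ and $a=e$. This handles all linear (Banach--)Lie groups, hence $GL(E)$. For a general connected Lie group $G$ it then applies this to the adjoint representation $\mathrm{Ad}:G\to GL(T_eG)$: if $g^{n_k}$ stays in $\mathrm{Ad}^{-1}(W)$ then $\mathrm{Ad}(g)=I$, so $g$ lies in the center $\mathrm{Z}(G)$, whose identity component is a single fixed group $\K^n/\Gamma$; there a fixed-lattice projection argument reduces the escape question to the scalar condition \eqref{eq:sup}. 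You would need to supply an argument of comparable strength (or reproduce this reduction) to close the gap; as written, the key implication is not established.
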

Notice that the equivalence of (ii) and (iii) of Theorem~\ref{Thm:Lie1} follows from Theorem~\ref{thm:1.2}.

\smallskip
\subsection{NSS sequences in normed algebras}
In order to prove Theorem \ref{Thm:Lie1}, an important step will be to prove that Jamison sequences are NSS for \emph{linear} Lie groups, i.e. for matrix groups. 
The groups $\textrm{GL}_{n}(\R)$ and $\textrm{GL}_{n}(\C)$ have No Small Subgroups; see for instance \cite{HofmannMorrisEMS}*{Proposition\,2.17} for an argument valid for all Lie groups. For complex matrices the following result has been proved as early as $1966$ by Cox \cite{Cox} (see also \cite{BeidCox}): if $M$ is a square matrix with complex entries such that $\sup_{n\ge 1} \|M^n - I\| < 1$, then $M$ is the identity matrix $I$. This result gives the maximal radius of the ball around $I$ in which $\textrm{GL}_n(\C)$ has no nontrivial subgroups. To show maximality, take $M=\delta I$ for small $\delta$. Cox's result has been extended to bounded linear operators on a Hilbert space by Nakamura and Yoshida \cite{NakamuraYoshida} and to arbitrary normed algebras by Hirschfeld \cite{Hirschfeld} and Wallen \cite{Wallen}. See also the related paper by Wils \cite{Wils}.
The fact that the group of invertible elements 
of a complex Banach algebra $A$ has no small subgroups was proved as early as $1941$ by Gelfand \cite{Gel}; see for instance the explanations in Kaplansky's book \cite{Kaplansky}*{p.\,88}. Notice that, by taking completions, one can always assume that the considered normed agebra is in fact a Banach algebra.

It seems that the first who considered, in the framework of normed algebras, powers of elements along subsequences was Chernoff \cite{Chernoff}. In $1969$, he proved that if $A$ is a complex normed algebra with unit $e$ and $a \in A$ is such that $\sup_{k\ge 0} \|a^{2^k} - e\| < 1$, then $a=e$. This is the normed algebras counterpart of the result of Got\^o and Yamabe \cite{GotoYamabe} for the sequence of powers of $2$. Chernoff's result has been extended/generalized in \cite{Gor} and \cite{KMOT}.

It is thus natural to introduce the following definition.  Recall that a unital normed algebra with unit $e$ satisfies $\norme{e} = 1$ and $\norme{xy} \le \norme{x}\norme{y}$ for every $x,y\in A$. 

\begin{definition}(NSS for normed algebras)\label{def:nss}
A triplet $(A, (n_k)_{k\ge 0}, \varepsilon)$ consisting of a normed unital algebra $A$ with unit $e$, a sequence of integers  $(n_k)_{k\ge 0}$ with $n_0 = 1$ and a positive real number $\varepsilon$ is said to be \emph{NSS} if the following implication holds true: the only element $a\in A$ satisfying $\sup_{k\ge 0} \|a^{n_k} - e\| < \varepsilon$ is $a=e$. 
\end{definition}

Here NSS stands again for \emph{No Small Subgroups}. Then Chernoff's result can be expressed shortly as ``$(A, (2^k)_{k\ge0},1)$ is NSS'' for any complex normed algebra $A$.
\par\smallskip
Our second main result, which extends \cite{Gor}*{Theorem\,2} and \cite{KMOT}*{Corollary\,4.2}, is the following:

\begin{theorem}[A characterization of NSS sequences, normed algebras version]\label{thm:BanAlg}
Let $\varepsilon > 0$ be a real number and let $(n_k)_{k\ge 0}$ be a sequence of integers with $n_0 = 1$. Then the following assertions are equivalent: 
\begin{itemize}
\item[(i)] $(A, (n_k)_{k\ge 0}, \varepsilon)$ is NSS for any complex normed algebra $A$;  

\item[(ii)] $(\C, (n_k)_{k\ge 0}, \varepsilon)$ is NSS; 

\item[(iii)] $(\nk,\eps)$ is a Jamison pair with $\eps \le 1$. 
\end{itemize}
\end{theorem}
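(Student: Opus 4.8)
The plan is to run the cycle (i)\,$\Rightarrow$\,(ii)\,$\Rightarrow$\,(iii)\,$\Rightarrow$\,(i). The first implication is immediate, since $\C$ is a complex unital normed algebra. For (ii)\,$\Rightarrow$\,(iii) I would test the NSS property of $(\C,\nk,\eps)$ against explicit scalars. Taking $a=1/2$ and using that $n_0=1$ and $n_k\to+\infty$, one gets $\sup_{k\ge0}\abs{a^{n_k}-1}=1$; were $\eps>1$, this would contradict the NSS property of $(\C,\nk,\eps)$, so $\eps\le1$. And if $(\nk,\eps)$ were not a Jamison pair, there would be some $\ld\in\T\setminus\{1\}$ with $\sup_{k\ge0}\abs{\ld^{n_k}-1}<\eps$, and then $a=\ld$ would again violate NSS; hence $(\nk,\eps)$ is a Jamison pair with $\eps\le1$ (which, by Theorem~\ref{thm:1.2}, is in particular a Jamison sequence).

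The substance of the proof is (iii)\,$\Rightarrow$\,(i). Let $A$ be a complex unital normed algebra with unit $e$; passing to the completion (which does not affect the conclusion $a=e$) we may assume $A$ is a Banach algebra. Fix $a\in A$ with $M:=\sup_{k\ge0}\norme{a^{n_k}-e}<\eps\le1$. Since $n_0=1$ we have $\norme{a-e}\le M<1$, so $a$ is invertible. The first step is to show that $\sigma(a)=\{1\}$. Indeed, for $\mu\in\sigma(a)$ the spectral mapping theorem gives $\mu^{n_k}-1\in\sigma(a^{n_k}-e)$, whence $\sup_{k\ge0}\abs{\mu^{n_k}-1}\le M<\eps\le1$; this excludes $\abs{\mu}>1$ (the left-hand quantity would tend to $+\infty$) and $\abs{\mu}<1$ (it would have $\limsup$ equal to $1$), so $\mu\in\T$, and then the Jamison pair property of $(\nk,\eps)$ forces $\mu=1$.

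Consequently $\sigma(a)=\{1\}$ and $\sigma(a^{n_k})=\{1\}$ for every $k$, so the principal branch of the logarithm is holomorphic near all of these spectra. I would then set $b:=\log a$ and $b_k:=\log(a^{n_k})$; these lie in the commutative closed subalgebra generated by $a$, and $\sigma(b)=\sigma(b_k)=\{0\}$, i.e.\ $b$ and $b_k$ are quasinilpotent. Since $\exp$ and $\log$ are mutually inverse near these spectra, $e^{b_k}=a^{n_k}=(e^{b})^{n_k}=e^{n_{k}b}$; applying $\log$ once more, which is legitimate because $b_k$ and $n_{k}b$ both have spectrum $\{0\}$, yields $b_k=n_{k}b$. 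On the other hand, the logarithmic series gives $\norme{b_k}\le-\log\bigl(1-\norme{a^{n_k}-e}\bigr)\le-\log(1-M)$ for every $k$. Hence $n_k\norme{b}=\norme{b_k}\le-\log(1-M)$ for all $k$, and since $n_k\to+\infty$ this forces $b=0$, i.e.\ $a=e$, as desired.

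I expect the delicate point to be the identity $\log(a^{n_k})=n_{k}\log a$ used in the last paragraph: it is valid precisely because the Jamison pair hypothesis was first used to normalize $\sigma(a)$ to $\{1\}$, which removes any branch ambiguity and makes $b$ quasinilpotent, so that $\norme{n_{k}b}$ can be tied, through the logarithm of $a^{n_k}$, to the \emph{uniformly bounded} quantity $-\log(1-M)$ rather than to the a priori uncontrolled norm of a quasinilpotent element. Once this is in place the argument closes immediately; note also that the reduction to the scalar case in (ii)\,$\Rightarrow$\,(iii) shows that the restriction $\eps\le1$ is genuinely needed.
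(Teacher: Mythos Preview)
Your proof is correct and follows essentially the same route as the paper's: reduce to a Banach algebra by completion, use the spectral mapping theorem together with the Jamison pair hypothesis to pin down $\sigma(a)=\{1\}$, and then exploit the holomorphic functional calculus identity $\log(a^{n_k})=n_k\log a$ (valid precisely because $\sigma(a)=\{1\}$) together with the uniform bound on $\norme{\log(a^{n_k})}$ to force $\log a=0$. The only differences are cosmetic---you test $\eps\le1$ with $a=1/2$ instead of $a=0$, you argue $\sigma(a)\subset\T$ directly from $\sup_k\abs{\mu^{n_k}-1}<1$ rather than via the spectral radius of $a^{-1}$, and you use the sharper bound $-\log(1-M)$ in place of the paper's $\eps/(1-\eps)$.
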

Note that the assumption that $(\C, (n_k)_{k\ge 0}, \varepsilon)$ is NSS for the Banach algebra $\C$ of all complex numbers is a minimal requirement to have that $(A, (n_k)_{k\ge 0}, \varepsilon)$ is NSS for any complex normed algebra $A$. By considering $a=0$ in the definition of a Jamison pair, it is clear that the condition $\eps \le 1$ is necessary in Theorem ~\ref{thm:BanAlg}. 

\smallskip
\subsection{NSS sequences in Banach-Lie groups and groups with a minimal metric} 
A rather easy consequence of Theorem \ref{thm:BanAlg} is that Jamison sequences are NSS for the class of \emph{linear} Banach-Lie groups. In this paper, a real (resp.\ complex) \emph{linear Banach-Lie group} $G$ is a topological group for which there exists an injective continuous homomorphism from $G$ into the group 
of invertible elements of a real (resp.\ complex) Banach algebra $A$. Corollary \ref{Cor:Lie1} below follows directly from Theorem \ref{thm:BanAlg} above when one considers complex linear Banach-Lie groups. The same result holds true for real linear Banach-Lie groups, by considering the complexification of real Banach algebras as in \cite{BonDun}*{p.\,68}.

\begin{corollary}[Characterizing {NSS} sequences for linear Banach-Lie groups]\label{Cor:Lie1} 
Let $(n_k)_{k\ge 0}$ be a sequence of integers with $n_0=1$. The following assertions are equivalent:
\begin{itemize}
\item[(i)]  \nk is NSS for the class of linear Banach-Lie groups;

\item[(ii)] \nk is a Jamison sequence.

\end{itemize}
\end{corollary}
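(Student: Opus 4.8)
The plan is to deduce Corollary~\ref{Cor:Lie1} directly from Theorem~\ref{thm:BanAlg}, via the equivalence (i)$\iff$(iii) therein, together with Theorem~\ref{thm:1.2}. The key point is that the NSS property for normed algebras (with some \emph{fixed but arbitrary} constant $\eps>0$) transfers to the group of invertible elements, since subgroups are closed under the algebra operations. First I would prove the implication (ii)$\Rightarrow$(i): assume \nk\ is a Jamison sequence, so by Theorem~\ref{thm:1.2} there exists $\eps>0$ such that $(\nk,\eps)$ is a Jamison pair; replacing $\eps$ by $\min(\eps,1)$ we may assume $\eps\le 1$, so by Theorem~\ref{thm:BanAlg} the triplet $(A,\nk,\eps)$ is NSS for every complex normed algebra $A$. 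Now let $G$ be a complex linear Banach-Lie group, with an injective continuous homomorphism $\varphi\colon G\to A^{\times}$ into the invertible elements of a complex Banach algebra $A$. Set $U=\varphi^{-1}\big(\{x\in A^{\times}:\norme{x-e}<\eps\}\big)$, which is a neighborhood of the identity of $G$ by continuity of $\varphi$ (and because $\varphi(e)=e$). If $g\in G$ satisfies $g^{n_k}\in U$ for all $k\ge 0$, then $a:=\varphi(g)$ satisfies $\norme{a^{n_k}-e}<\eps$ for all $k$, hence $a=e$ by the NSS property of $(A,\nk,\eps)$, and since $\varphi$ is injective, $g=e$. Thus $(G,\nk)$ is NSS.

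For the converse (i)$\Rightarrow$(ii), the cleanest route is to exhibit a single linear Banach-Lie group for which the NSS property forces \nk\ to be Jamison. The natural candidate is the group $\T$ (the unit circle), viewed as a linear Banach-Lie group via the inclusion $\T\hookrightarrow \C^{\times}$: it is a topological group and the inclusion is an injective continuous homomorphism into the invertible elements of the Banach algebra $\C$. If \nk\ is NSS for the class of linear Banach-Lie groups, then in particular $(\T,\nk)$ is NSS in the sense of the definition for topological groups; but a neighborhood $U$ of $1$ in $\T$ contains a set of the form $\{\ld\in\T:|\ld-1|<\eps\}$ for some $\eps>0$, and the NSS property then says precisely that $\sup_{k\ge 0}|\ld^{n_k}-1|\ge\eps$ for every $\ld\in\T\setminus\{1\}$, i.e.\ $(\nk,\eps)$ is a Jamison pair. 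By Theorem~\ref{thm:1.2}, \nk\ is a Jamison sequence. (Alternatively, one can invoke the already-noted equivalence (ii)$\iff$(iii) of Theorem~\ref{Thm:Lie1}, but since that theorem is proved later it is preferable to argue directly.)

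The remaining point is the parenthetical remark in the excerpt: the statement of Theorem~\ref{thm:BanAlg} is phrased for \emph{complex} normed algebras, so for a \emph{real} linear Banach-Lie group $G\hookrightarrow A^{\times}$ with $A$ a real Banach algebra one must pass to the complexification $A_{\C}$, normed as in \cite{BonDun}*{p.\,68} so that it is a complex Banach algebra with $\norme{e}=1$ and the natural embedding $A\hookrightarrow A_{\C}$ is isometric and multiplicative. Then $G\hookrightarrow A^{\times}\hookrightarrow (A_{\C})^{\times}$ is again an injective continuous homomorphism into the invertibles of a complex Banach algebra, and the argument of the first paragraph applies verbatim. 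I do not expect any genuine obstacle here: the whole proof is a transfer argument, and the only mild care needed is (a) choosing the neighborhood $U$ via the preimage of the right ball and checking it is indeed a neighborhood of $e$, (b) the reduction $\eps\le 1$ to match the hypothesis of Theorem~\ref{thm:BanAlg}, and (c) the real-to-complex reduction via complexification; none of these is more than a routine verification.
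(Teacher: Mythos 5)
Your proposal is correct and follows exactly the route the paper intends: the paper simply asserts that Corollary~\ref{Cor:Lie1} ``follows directly'' from Theorem~\ref{thm:BanAlg} in the complex case and from complexification in the real case, and your argument fills in precisely those routine details (pulling back the $\eps$-ball under the embedding for (ii)$\Rightarrow$(i), and testing against $\T\hookrightarrow\C^{\times}$ together with Theorem~\ref{thm:1.2} for (i)$\Rightarrow$(ii)). No gaps.
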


We do not know whether Corollary \ref{Cor:Lie1} can be extended to the class of Banach-Lie groups. See Remark \ref{pasencoreecrite} after the proof of Theorem \ref{Thm:Lie1} for a discussion of the difficulties that arise when considering Banach-Lie groups instead of (finite dimensional) Lie groups.

\begin{problem}\label{pro:open}
Let \nk be a Jamison sequence with $n_0=1$. Is \nk NSS for the class of Banach-Lie groups?
\end{problem}

Another interesting class of topological groups which has been recently introduced and studied by Rosendal in \cite{Rosendal} is that of groups possessing a minimal metric.

\begin{definition}[minimal metric; \cite{Rosendal}]\label{def:min}
A  metric $d$ on a (metrizable) topological group $G$ is said to be 
\emph{minimal} if it is compatible with the topology of $G$, left-invariant (that is, $d(hg,hf) = d(g,f)$ for all $g$, $f$, $h$ in $G$) and, for every other compatible left-invariant metric $\partial$ on $G$, the map
$$
{\rm id}\colon (G,\partial)\mapsto (G,d)
$$
is Lipschitz in a neighborhood of the identity $e$ of $G$, \emph{i.e.}\,, there is a neighborhood $U$ of $e$ and a positive constant $K$ such that for all $g,f\in U$,
$$
d(g,f)\leq K\cdot \partial(g,f).
$$
\end{definition}

Remark that minimal metrics coincide with metrics which are termed \emph{weak Gleason} in \cite{Tao5} as they underlie Gleason's results in \cite{Gleason}; see \cite{Rosendal} and \cite{Tao5}. It should be also noted that groups with minimal metrics have no small subgroups (\cite{Rosendal}*{p.\,198}) and thus locally compact groups with minimal metrics are isomorphic to Lie groups. It comes as a natural question to ask:

\begin{problem}\label{pro:open2}
Let \nk be a Jamison sequence with $n_0=1$.
Is \nk NSS for the class of groups with a minimal metric?
\end{problem}

The following result provides a partial answer to Problems \ref{pro:open} and \ref{pro:open2} for sequences with bounded quotients. 

\begin{theorem}[Sequences with bounded quotients as NSS sequences]\label{thm:Lie2}
Let $(n_k)_{k\ge 0}$ be a sequence of integers with $n_0 = 1$ and 
$$\sup_{k\ge 0} \frac{n_{k+1}}{n_k} \le c < +\infty .$$ 
Then \nk is NSS for the class of Banach-Lie groups and 
for the class of topological groups possessing a minimal metric.
\end{theorem}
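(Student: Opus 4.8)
The plan is to treat both classes of groups by a single device: exhibit, near the identity, a \emph{gauge} $\rho\ge 0$ that vanishes only at $e$ and obeys a Gleason-type \emph{escape estimate}, and then combine it with the bounded-quotient hypothesis exactly as in the proof of Proposition~\ref{prop:bddq}. Precisely, suppose there are a neighbourhood $V_0$ of $e$, a continuous map $\rho\colon V_0\to[0,+\infty)$ with $\rho(g)=0\iff g=e$, a constant $C\ge 1$ and a radius $r_0>0$ such that for every $g\in V_0$ and every integer $n\ge 1$ with $n\rho(g)<r_0$ one has $g^n\in V_0$ and $\rho(g^n)\ge C^{-1}n\rho(g)$. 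I claim $(G,\nk)$ is then NSS, with witnessing neighbourhood $U:=\{g\in V_0:\rho(g)<(cC^2)^{-1}r_0\}$ (an open neighbourhood of $e$, since $\rho$ is continuous). Indeed, suppose $g\neq e$ but $g^{n_k}\in U$ for all $k\ge 0$. Taking $k=0$ gives $\rho(g)<(cC^2)^{-1}r_0\le r_0$, so, since $\rho(g)>0$ and $n_k\to+\infty$, there is a largest index $j\ge 0$ with $n_j\rho(g)<r_0$. Maximality gives $n_{j+1}\rho(g)\ge r_0$, and $n_{j+1}\le c\,n_j$ then yields $n_j\rho(g)\ge r_0/c$; hence, applying the escape estimate with $n=n_j$, $\rho(g^{n_j})\ge C^{-1}n_j\rho(g)\ge (cC)^{-1}r_0\ge (cC^2)^{-1}r_0$ (using $C\ge 1$), contradicting $g^{n_j}\in U$. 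For $c=2$ this is exactly the Got\^o--Yamabe scheme.

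\textbf{Banach-Lie groups.} Here the gauge comes from the exponential chart and the escape estimate is immediate with $C=1$. Since $G$ is a Banach-Lie group, its exponential map $\exp\colon\mathfrak g\to G$ has $T_0\exp=\mathrm{id}_{\mathfrak g}$, so by the inverse function theorem it restricts to a homeomorphism of a ball $B_{\mathfrak g}(0,r_0)$ onto an open neighbourhood $V_0$ of $e$; for $g=\exp X$ with $\|X\|<r_0$ put $\rho(g):=\|X\|$. If $n\rho(g)<r_0$ then $nX\in B_{\mathfrak g}(0,r_0)$, and since $t\mapsto\exp(tX)$ is a one-parameter subgroup, $g^n=\exp(nX)\in V_0$; injectivity of $\exp$ on $B_{\mathfrak g}(0,r_0)$ forces $\rho(g^n)=\|nX\|=n\rho(g)$. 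So the hypotheses above hold with $C=1$, and the first paragraph finishes this case. This uses neither a linear representation of $G$ nor any spectral theory, only the local structure of $\exp$; in particular it covers all Banach-Lie groups, not merely the linear ones.

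\textbf{Groups with a minimal metric.} Here I would take $\rho(g)=d(g,e)$ for a minimal metric $d$ on $G$ and invoke that, by \cite{Rosendal} (a minimal metric is a weak Gleason metric in the sense of \cite{Tao5}), $d$ satisfies the Gleason escape property: there are $C\ge 1$ and $r_0>0$ with $d(g^n,e)\ge C^{-1}n\,d(g,e)$ whenever $n\,d(g,e)<r_0$. Since $d$ is left-invariant, compatible with the topology and continuous, we may take $V_0=G$, the set $U$ above is an open neighbourhood of $e$, and the counting argument applies verbatim, so $(G,\nk)$ is NSS.

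\textbf{Main obstacle.} There is no deep point: the whole content is the escape estimate together with the observation that, under bounded quotients, the critical index $j$ at which $n_j\rho(g)$ first leaves $[0,r_0)$ satisfies $n_j\rho(g)\ge r_0/c$, so $\rho(g^{n_j})$ is bounded below. The two external ingredients — the escape property of minimal metrics and the local-diffeomorphism property of the Banach-Lie exponential — are standard. What breaks down for a general Jamison sequence is precisely this step: if $n_{k+1}/n_k$ is unbounded, $n_k\rho(g)$ can leap from arbitrarily small to arbitrarily large between two consecutive indices, so no single power $g^{n_k}$ is forced to remain in the chart while staying bounded away from $e$; recovering the NSS conclusion then seems to require the genuinely spectral input behind Theorem~\ref{thm:1.2}, which is not available in infinite dimensions nor for groups carrying only a minimal metric — this is exactly the content of Problems~\ref{pro:open} and~\ref{pro:open2}.
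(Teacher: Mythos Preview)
Your argument is correct and, for the minimal-metric case, genuinely streamlines the paper's proof. For Banach--Lie groups your treatment coincides with the paper's: both transport the question to the Lie algebra via the exponential chart and the identity $\exp(nX)=(\exp X)^n$, and then run the bounded-quotient counting argument (essentially that of Proposition~\ref{prop:bddq}) on $\|X\|$.

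For groups with a minimal metric the paper proceeds differently. Besides the escape property --- your only tool --- it invokes two further consequences of Rosendal's theorem (that multiplication is locally Lipschitz, and a derived ``trapping'' lemma), and then shows by induction on $k$ that \emph{all} powers $x,x^{2},\dots,x^{n_k}$ lie in a fixed small ball, finally concluding $x=e$ from the quantitative NSS property. Your route bypasses the induction, the Lipschitz multiplication, the trapping lemma, and the separate appeal to NSS: the critical-index argument needs only the escape estimate. One point you should spell out, however: the escape property as recorded in \cite{Rosendal} (and in the paper's Lemma~\ref{lem:L1}) has hypothesis $\max_{1\le i\le n}d(g^{i},e)<a$, not $n\,d(g,e)<r_0$. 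The reduction to your formulation is the one-line observation that left-invariance of $d$ gives $d(g^{i},e)\le i\,d(g,e)\le n\,d(g,e)$, so $n\,d(g,e)<a$ already forces all intermediate powers into the $a$-ball; taking $r_0=a$ and $C=K$ then yields exactly your estimate. With that bridge made explicit, your unified gauge-and-critical-index packaging is shorter and makes the role of the bounded-quotient hypothesis entirely transparent.
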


\subsection{Sequences characterizing positive operators}\label{subsec:1.5}
Jamison sequences also appear naturally in characterizations of (self-adjoint) positive operators by the accretiveness of some of their powers.
It has been proved by Johnson~\cite{John} for matrices and by DePrima and Richard~\cite{DPR} for operators that a bounded linear operator $A $ on a complex Hilbert space $H$ is a (semi-definite) positive operator if and only if all iterates $A^n$, $n\ge 0$, are accretive. See also \cites{Laub,Uch,GWZ} for related results. Recall that an operator $B$ is said to be \emph{positive} (we write $B\ge 0$) if $\pss{Bx}{x} \ge 0$ for every $x\in H$. The operator $B$ is \emph{accretive} if $\Re B \ge 0$, where $\Re B = (B+B^*)/2$. 
We also write $B > 0$ when $B$ is positive and invertible; notice that this is equivalent to $B \ge \eps I$ for some positive $\eps$. 
It is also true \cite{DPR} that $A > 0$ if and only if $\Re A^n > 0$ for all $n \ge 0$. 
Concerning subsequences, Shiu proved in \cite{Shiu} that powers of $2$ suffice: if $\Re A^{2^k} \ge 0$ for every $k\ge 0$, then $A\ge 0$. 
The same proof show that $A > 0$ whenever $\Re A^{2^k} > 0$ for every $k\ge 0$.

The reader should not be surprised now that we ask which sequences can replace the powers of $2$ in Shiu's result. The answer is obtained in the following theorems, which give all admissible sequences characterizing positive invertible operators by the accretiveness of some of their powers. We start with the characterization of positive invertible operators.

\begin{theorem}[Sequences characterizing positive invertible operators]\label{thm:DePrimaIN}
Let $(n_k)_{k\ge 0}$ be a sequence of integers with $n_0 = 1$. The following assertions are equivalent:
\begin{itemize}
\item[(a)] Every Hilbert space operator $A$ for which $\Re A^{n_k} > 0$ for every $k\ge 0$ is a positive invertible operator;

\item[(b)] Every complex number $c$ with $\Re c^{n_k} > 0$ for every $k\ge 0$ is real and satisfies $c > 0$;

\item[(c)] The pair $(\nk,\sqrt{2})$ is a Jamison pair.
\end{itemize}
\end{theorem}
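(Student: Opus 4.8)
The plan is to prove the cycle of implications $(a)\Rightarrow(b)\Rightarrow(c)\Rightarrow(a)$, with the bulk of the work concentrated in $(c)\Rightarrow(a)$. The implication $(a)\Rightarrow(b)$ is immediate: apply (a) to the operator $A=cI$ acting on a one-dimensional Hilbert space; then $\Re A^{n_k}=\Re(c^{n_k})I>0$ for every $k$ forces $cI>0$, that is, $c$ is real and positive. For $(b)\Rightarrow(c)$, I would argue by contraposition: if $(\nk,\sqrt2)$ is \emph{not} a Jamison pair, then by definition (and Theorem~\ref{thm:1.2}) for every $\eps<\sqrt2$ there exists $\ld\in\T\setminus\{1\}$ with $\sup_{k\ge0}|\ld^{n_k}-1|\le\eps<\sqrt2$. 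The elementary geometric computation $|\mu-1|^2 = 2-2\Re\mu$ for $\mu\in\T$ shows that $|\ld^{n_k}-1|<\sqrt2$ is equivalent to $\Re(\ld^{n_k})>0$. Taking $\eps$ close enough to $\sqrt 2$ (or using that the relevant supremum over $\T\setminus\{1\}$ is attained/approached) we obtain some $\ld\in\T\setminus\{1\}$ with $\Re(\ld^{n_k})>0$ for all $k$ while $\ld$ is not a positive real number (indeed $|\ld|=1$ and $\ld\neq1$), contradicting (b). A small amount of care is needed to pass from ``$\le\eps$ for all $\eps<\sqrt2$'' to ``strict positivity for a single $\ld$''; one can either invoke a compactness/normal-families argument as in the proof of Theorem~\ref{thm:1.6}, or observe that $\Lambda_{\sqrt2}\setminus\{1\}\neq\emptyset$ when the pair fails to be Jamison and every element of it has $\Re(\ld^{n_k})\ge0$, then perturb slightly, or simply reformulate (b) with $\ge$ matching the ``strict Jamison pair'' version; I will align the precise quantifiers with the statement as given.

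The heart of the matter is $(c)\Rightarrow(a)$. Assume $(\nk,\sqrt2)$ is a Jamison pair and let $A\in\mathcal B(H)$ satisfy $\Re A^{n_k}>0$ for every $k\ge0$. The first step is a spectral-mapping / functional-calculus reduction: I want to show $\sigma(A)\subset(0,+\infty)$ and that $A$ is similar to (or already is) a positive operator. The condition $\Re A^{n_k}>0$ means $A^{n_k}$ is a strictly accretive operator, hence $\sigma(A^{n_k})\subset\{\Re z>0\}$ (the open right half-plane), so by the spectral mapping theorem $\sigma(A)$ lies in the set $\{\mu\in\C : \mu^{n_k}\in\{\Re z>0\}\text{ for all }k\}$. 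Writing $\mu=\rho\ld$ with $\rho>0$ and $\ld\in\T$, the condition $\Re(\mu^{n_k})>0$ for all $k$ forces in particular $\Re(\ld^{n_k})$ to stay away from being negative once we also control the modulus; more usefully, I would look at the boundary behavior and combine it with the Jamison hypothesis applied to $\ld$: if $\ld\neq1$ then $\sup_k|\ld^{n_k}-1|\ge\sqrt2$, i.e.\ $\inf_k\Re(\ld^{n_k})\le0$, which (after handling the modulus $\rho$ by a continuity/openness argument, since the half-plane condition is an open condition stable under small perturbation of $\rho$) contradicts $\Re(\mu^{n_k})>0$. Thus every $\mu\in\sigma(A)$ has argument $0$, i.e.\ $\sigma(A)\subset(0,+\infty)$.

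With $\sigma(A)\subset(0,+\infty)$ in hand, the remaining step is to upgrade ``positive spectrum'' to ``positive operator'', using the accretivity of the powers. Here I expect the main obstacle: positive spectrum alone does not give self-adjointness (think of an upper-triangular $2\times2$ matrix with positive diagonal), so the accretivity of \emph{all} the powers $A^{n_k}$ must be exploited quantitatively. The natural route, following DePrima--Richard~\cite{DPR} and Shiu~\cite{Shiu}, is to use the holomorphic functional calculus: since $\sigma(A)$ is a compact subset of $(0,+\infty)$, we may form $B=\log A$ (principal branch, well defined and analytic on a neighborhood of $\sigma(A)$), and then $A^{t}=e^{tB}$ makes sense for all real $t\ge0$, not just integers, and the map $t\mapsto A^t$ is an analytic semigroup. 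The key observation is that $\{t\ge 0 : \Re A^t\ge 0\}$ is a closed set that contains $n_k$ for all $k$; I would show it is also ``large'' enough — e.g.\ that it is stable under the bounded-quotient-type arguments, or, better, that the Jamison condition forces the set $\{\ld^{n_k}:k\ge0\}$ to be dense enough in $\T$ near $1$ (via Theorem~\ref{thm:1.6}, the countability of $\Lambda_{\sqrt2}$) that accretivity propagates. Concretely: if $A$ were not positive, the spectral theorem applied to the self-adjoint/skew-adjoint decomposition, or a direct vector argument producing $x$ with $\Re\langle A^{n_k}x,x\rangle\le 0$ for some $k$, would exploit that the ``bad'' set of $\ld$'s (those for which the rotation $\ld^{n_k}$ never escapes the arc $\{\Re>0\}$) is countable, while the spectral measure of a non-positive $A$ would be forced to charge an uncountable such set — a contradiction with $(c)$ via Theorem~\ref{thm:1.6}. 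I would carry this out by reducing, through the spectral theorem for the normal part and a perturbation argument for the quasi-nilpotent part, to the scalar statement (b)/(c) applied pointwise to the spectrum, which is exactly where the Jamison pair hypothesis with constant $\sqrt2$ enters; the constant $\sqrt2$ is forced precisely because $\Re\mu>0\iff|\mu/|\mu|-1|<\sqrt2$ on the unit circle. The delicate point throughout is the interplay between the radial variable $\rho=|\mu|$ and the angular Jamison condition, and the passage from a spectral statement to an operator (norm/positivity) statement for non-normal $A$; I anticipate needing the full strength of Theorem~\ref{thm:1.6} rather than just Theorem~\ref{thm:1.2} to close this gap.
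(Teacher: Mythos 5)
Your easy implications essentially match the paper's: (a)$\Rightarrow$(b) by testing on scalar multiples of the identity, and (b)$\Leftrightarrow$(c) via the identity $|\mu-1|^{2}=2-2\Re\mu$ for $\mu\in\T$. The quantifier worries you raise there are unnecessary: the negation of ``$(\nk,\sqrt2)$ is a Jamison pair'' directly produces a \emph{single} $\ld\in\T\setminus\{1\}$ with $\sup_{k}|\ld^{n_k}-1|<\sqrt2$, hence $\Re(\ld^{n_k})>0$ for every $k$, contradicting (b). Likewise the ``radial variable'' is harmless throughout, since $\Re(\rho^{n}\ld^{n})>0$ if and only if $\Re(\ld^{n})>0$ when $\rho>0$; no continuity or openness argument is needed, and Theorem~\ref{thm:1.6} plays no role anywhere in the paper's proof.

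The genuine gap is in the implication towards (a). You correctly derive $\sigma(A)\subset(0,+\infty)$ from the spectral mapping theorem together with $\sigma(T)\subset\overline{W(T)}$, and you correctly observe that positive spectrum does not yield self-adjointness for non-normal $A$. But the machinery you then propose does not exist: a general bounded Hilbert space operator has no spectral measure and no decomposition into a ``normal part'' plus a ``quasi-nilpotent part'', so the planned reduction to the scalar case collapses, and your $\log A$/analytic semigroup idea is never connected to the numerical range of $A$, which is the object that actually detects positivity. The paper closes this gap with the Matsaev--Palant (Li--Rodman--Spitkovsky) theorem on roots: if $W(T)$ contains no negative real number, then for each integer $m\ge2$ the operator $T$ admits an $m$-th root $S$ with $W(S)$ contained in the sector of half-opening $\pi/m$ about the positive axis, and $S$ is the \emph{unique} $m$-th root whose spectrum lies in that sector. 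Applying this to $T=A^{n_k}$ and noting that $\sigma(A)\subset[0,+\infty)$ is contained in every such sector, the uniqueness statement identifies $A$ with $(A^{n_k})^{1/n_k}$, so $W(A)$ lies in the sector of half-opening $\pi/n_k$ for every $k$; intersecting over $k$ gives $W(A)\subset[0,+\infty)$, whence $A$ is self-adjoint and positive, and invertible since $0\notin\overline{W(A)}$. This transfer of numerical-range information from the powers $A^{n_k}$ back to $A$ via sectorial roots is the key idea missing from your proposal.
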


We also have the following variant for positive operators. 

\begin{theorem}[Sequences characterizing positive operators]\label{thm:DePrima}
Let $(n_k)_{k\ge 0}$ be a sequence of  integers with $n_0 = 1$. The following assertions are equivalent:
\begin{itemize}
\item[(a)] Every Hilbert space operator $A$ for which $\Re A^{n_k} \ge 0$ for every $k\ge 0$ is a positive operator;

\item[(b)] Every complex number $c$ with $\Re c^{n_k} \ge 0$ for every $k\ge 0$ is real and satisfies $c \ge 0$;

\item[(c)] Every $\ld \in \T$ such that $\sup_{k\ge 0} \abs{\ld^{n_k}-1} \le \sqrt{2}$ satisfies $\ld = 1$.
\end{itemize}
\end{theorem}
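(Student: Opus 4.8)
The plan is to prove Theorem~\ref{thm:DePrima} by establishing the chain of implications $(a) \Rightarrow (b) \Rightarrow (c) \Rightarrow (a)$, with the scalar-to-operator passage at the end being the main technical point. The implication $(a) \Rightarrow (b)$ is immediate by specializing to operators on a one-dimensional Hilbert space: if $c \in \C$ satisfies $\Re c^{n_k} \ge 0$ for all $k$, then viewing $c$ as a scalar operator, hypothesis $(a)$ forces $c \ge 0$, i.e.\ $c$ is real and nonnegative. For $(b) \Rightarrow (c)$, I would argue by contraposition: suppose $\ld \in \T \setminus \{1\}$ satisfies $\sup_{k\ge 0} \abs{\ld^{n_k}-1} \le \sqrt 2$. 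Writing $\ld^{n_k} = e^{i\theta_k}$, the inequality $\abs{e^{i\theta_k}-1}^2 = 2 - 2\cos\theta_k \le 2$ is equivalent to $\cos\theta_k \ge 0$, i.e.\ $\Re \ld^{n_k} \ge 0$ for every $k$; since $n_0 = 1$ we also have $\ld = \ld^{n_0}$ with $\Re\ld \ge 0$. Then $c := \ld$ is a complex number of modulus $1$ with $\Re c^{n_k} \ge 0$ for all $k$, so by $(b)$ it must be real and nonnegative, hence $\ld = 1$, contradicting $\ld \ne 1$. (Conversely this same computation shows $(c)$ is exactly the ``strict'' Jamison-pair condition $\sup_k|\ld^{n_k}-1| \le \sqrt 2 \Rightarrow \ld = 1$ advertised in item (4) of the introduction, so the three conditions are genuinely the $\sqrt 2$-analogue of Theorem~\ref{thm:DePrimaIN}.)

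The substantial implication is $(c) \Rightarrow (a)$. Let $A \in \B{H}$ satisfy $\Re A^{n_k} \ge 0$ for all $k \ge 0$; I must show $A \ge 0$. The first observation is that $\Re A = \Re A^{n_0} \ge 0$, so $A$ is accretive, hence its spectrum $\sigma(A)$ lies in the closed right half-plane $\{\Re z \ge 0\}$. The strategy is to show, using the spectral hypothesis, that in fact $\sigma(A) \subseteq [0,+\infty)$ and that $A$ has no non-trivial Jordan-type obstruction, forcing self-adjointness and positivity. The key analytic input is the following: if $z \in \sigma(A)$ then $z^{n_k} \in \sigma(A^{n_k})$ by the spectral mapping theorem, and since $\Re A^{n_k} \ge 0$ implies $W(A^{n_k})$ (the numerical range) lies in the right half-plane, we get $\Re z^{n_k} \ge 0$ for all $k$. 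Writing $z = r\ld$ with $r \ge 0$ and $\ld \in \T$ (assuming $z \ne 0$), one does \emph{not} immediately get $\Re \ld^{n_k} \ge 0$ because of the factor $r^{n_k}$ — but $\Re z^{n_k} = r^{n_k}\Re \ld^{n_k} \ge 0$ with $r^{n_k} > 0$ does give $\Re \ld^{n_k} \ge 0$, hence by $(c)$, $\ld = 1$ and so $z = r \in [0,+\infty)$. Thus $\sigma(A) \subseteq [0,+\infty)$.

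Having located the spectrum on the nonnegative real axis, the remaining task is to upgrade this to $A \ge 0$, i.e.\ to rule out non-self-adjoint accretive operators with real spectrum. Here I would follow the DePrima--Richard / Shiu line of argument: for an accretive operator, the Cayley transform (or a suitable resolvent estimate) together with control of $\Re A^{n_k} \ge 0$ along the sequence gives enough to conclude. Concretely, one approach is to test against an approximate eigenvector: if $A$ were not self-adjoint, a standard argument produces a unit vector $x$ and a value $z = \pss{Ax}{x}$ with $\Im z \ne 0$ while keeping $\Re z$ small, and then $\Re \pss{A^{n_k}x}{x}$ would fail to stay nonnegative for some $k$ — this is precisely where the Jamison-pair inequality $\sup_k|\ld^{n_k}-1| \le \sqrt 2 \Rightarrow \ld = 1$ is used again, now at the level of numerical range rather than spectrum, via an approximation/perturbation argument reducing to the essentially scalar case treated in $(b)$. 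I expect \textbf{this last step — promoting ``spectrum in $[0,\infty)$'' plus ``$\Re A^{n_k}\ge 0$'' to genuine positivity of $A$, handling the numerical range and not just the spectrum — to be the main obstacle}, since numerical ranges are not multiplicative and the factor $r^{n_k}$ no longer helps when $r$ can exceed $1$ (powers may blow up) or lie below $1$ (powers may shrink to $0$, destroying the sign information); the resolution should mirror the structure of the proof of Theorem~\ref{thm:DePrimaIN}, with the non-strict inequalities tracked carefully throughout and a limiting argument closing the gap between the strict version $(A>0)$ and the non-strict version $(A \ge 0)$.
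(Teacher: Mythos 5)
Your treatment of the easy implications matches the paper: (a)~$\Rightarrow$~(b) by restricting to scalars, the equivalence of (b) and (c) via the identity $\abs{\ld^{n_k}-1}^2 = 2-2\Re(\ld^{n_k})$ together with the normalization $\ld = c/|c|$, and the localization $\sigma(A)\subseteq[0,+\infty)$ via the spectral mapping theorem, the inclusion $\sigma(A^{n_k})\subseteq\overline{W(A^{n_k})}\subseteq\overline{\C}_+$, and the scalar condition. All of that is correct and is essentially what the paper does.

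However, there is a genuine gap at exactly the point you flag as ``the main obstacle'': the passage from ``$\sigma(A)\subseteq[0,+\infty)$ and $\Re A^{n_k}\ge 0$ for all $k$'' to ``$A\ge 0$''. Your sketch of an approximate-eigenvector argument does not work as stated, for the reason you yourself identify: the numerical range is not multiplicative, so there is no direct way to deduce that $\Re\ps{A^{n_k}x}{x}$ becomes negative from the fact that $\ps{Ax}{x}$ has nonzero imaginary part, and real spectrum does not rule out non-self-adjoint accretive operators (any nilpotent perturbation of a positive matrix has spectrum in $[0,\infty)$). The paper closes this gap with a specific tool you do not supply: the Matsaev--Palant / Li--Rodman--Spitkovsky theory of fractional powers. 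If $W(T)$ contains no negative real number, then $T$ admits an $m$-th root $S=T^{1/m}$ with $W(S)\subseteq\Sigma(\pi/m):=\{re^{i\theta}: r\ge 0,\ \abs{\theta}\le\pi/m\}$, and this root is \emph{unique} among operators $R$ with $R^m=T$ and $\sigma(R)\subseteq\Sigma(\pi/m)$. Applying this with $T=A^{n_k}$ and $m=n_k$: since $\sigma(A)\subseteq[0,+\infty)\subseteq\Sigma(\pi/n_k)$ and $A^{n_k}=A^{n_k}$, uniqueness forces $(A^{n_k})^{1/n_k}=A$, and the existence statement then yields $W(A)\subseteq\Sigma(\pi/n_k)$ for every $k$. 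Letting $k\to\infty$ gives $W(A)\subseteq[0,+\infty)$, hence $A$ is self-adjoint and positive. This root-uniqueness argument is the missing idea; without it (or an equivalent substitute) your proof of (c)~$\Rightarrow$~(a) is incomplete.
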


\begin{remark}
We obtain from Theorems \ref{thm:DePrima} and \ref{thm:DePrimaIN} the amusing consequence that the following operator-theoretical implication holds true. If the assertion ``Every Hilbert space operator $A$ for which $\Re A^{n_k} \ge 0$ for all $k\ge 0$ is a positive operator'' is true, then ``Every Hilbert space operator $A$ for which $\Re A^{n_k} > 0$ for all $k\ge 0$ is a positive invertible operator'' is also true. Indeed, the (c) of Theorem~\ref{thm:DePrima} is stronger than the (c) of Theorem~\ref{thm:DePrimaIN}. We are not aware of a simple argument proving directly this operator-theoretical implication. 
\end{remark}

\begin{remark} 
Notice also that there are sequences satisfying the equivalent conditions of Theorem \ref{thm:DePrimaIN} which do not satisfy the conditions of Theorem \ref{thm:DePrima}. Consider for instance the sequence $n_k= 3^k$ for $k\ge 0$. Then, by Proposition~\ref{prop:bddq}, \nk is a Jamison sequence with Jamison constant $2\sin(\pi/4) = \sqrt{2}$. Thus the sequence of powers of $3$ satisfies the conclusions of Theorem ~\ref{thm:DePrimaIN}. On the other hand, the sequence $n_k= 3^k$ does not satisfy condition (b) of Theorem ~\ref{thm:DePrima}: we have $\Re i^{3^k} = 0$ for every $k\ge 0$. 
\end{remark}

\begin{remark}
Recall the following result proved by Shiu in \cite{Shiu}, and which has been generalized in Theorems \ref{thm:DePrima} and \ref{thm:DePrimaIN}: for a Hilbert space bounded linear operator $A$, if $\Re A^{2^k} \ge 0$ for every $k\ge 0$, then $A\ge 0$. Shiu's result is very sensitive to perturbations, in the sense that replacing one term into the sequence of powers of $2$ can destroy the  property above. Indeed, consider the sequence $(m_k)_{k\ge 0}$ whose terms are given by
$ 1, \blue{3}, 4, 8, 16, \ldots  $,
which is obtained by replacing $2$ by $3$ on the second place of the sequence of powers of $2$.  
The sequence $(m_k)_{k\ge 0}$ does not satisfy the analogue of Shiu's result. Indeed, we have~$\textrm{ Re } (i^{m_k}) \ge 0$ for every $k\ge 0$. The reason is that replacing one term of the sequence can drastically change the Jamison constant. Observe that the Jamison constant of the sequence $(m_k)_{k\ge 0}$ is $\sqrt{2}$ (while the Jamison constant of the sequence $(2^k)_{k\ge 0}$ of powers of $2$ is $\sqrt{3})$. Indeed, suppose that $\lambda\in\T$ is such that $\sup_{k\ge 0}|\lambda^{m_k}-1|<\sqrt{2}$. Then $\sup_{k\ge 0}|(\lambda^{4})^{2^k}-1|<\sqrt{2}<\sqrt{3}$, so that $\lambda^{4}=1$. Since $|\lambda-1|<\sqrt{2}$, $\lambda=1$. Hence $((m_k)_{k\ge 0},\sqrt{2})$ is a Jamison pair. Since $\sup_{k\ge 0}|i^{m_k}-1|=\sqrt{2}$, $\sqrt{2}$ is the Jamison constant of $(m_k)_{k\ge 0}$. A recent note dealing with perturbations of Jamison sequences is \cite{Paulos}.
\end{remark}

\section{Jamison sequences in normed algebras}\label{Sec3}
Our aim in this section is to prove Theorem \ref{thm:BanAlg}, as well as some related results and consequences, including in particular several spectral characterizations of Jamison sequences in normed algebras (see Theorem \ref{thm:spectral} below).
Given an element $a$ of a Banach algebra $A$, we denote by $\sigma(a)$ the \emph{spectrum} of the element $a$, and by $r(a)$ its \emph{spectral radius}. We write 
$\overline{\mathbb{D}}=\{\lambda\in\T\textrm{ ; }|\lambda|\le 1\}$
and $\T=\{\lambda \in\C\,;\,|\lambda |=1\}$.

 We begin with the proof of Theorem \ref{thm:BanAlg}.

\subsection{Proof of Theorem \ref{thm:BanAlg}}
It is clear that if $(A, (n_k)_{k\ge 0}, \varepsilon)$ is NSS for any complex normed algebra $A$, then, in particular, $(\C, (n_k)_{k\ge 0}, \varepsilon)$ is also NSS.  

Suppose now that $(\C, (n_k)_{k\ge 0}, \varepsilon)$ is NSS. Then $z=1$ is the only complex number satisfying $\sup_{k\ge 0} |z^{n_k} - 1| < \eps$. In particular, $((n_k)_{k\ge 0},\eps)$ is a Jamison pair. By considering $z=0$ we obtain that $\eps \le 1$. 

Suppose now that $(n_k)_{k\ge 0}$ is a Jamison sequence and let $\eps \le 1$ be such that $((n_k)_{k\ge 0},\eps)$ is a Jamison pair.  We undertake the proof that $(A, (n_k)_{k\ge 0}, \varepsilon)$ is NSS for any complex normed algebra $A$. By considering the completion of the normed algebra, we can assume without loss of any generality that $A$ itself is a Banach algebra. Suppose that $a\in A$ satisfies 
 \begin{equation}\label{eq:4.1}
 \|a^{n_k} - e\| < \eps
\end{equation}
 for every $k\ge 0$. This implies that $\|a^{n_k}\| \le 1 + \eps$ and thus the spectral radius of $a$
 satisfies
 $ r(a) = \lim_{k\to\infty} \|a^{n_k}\|^{1/n_k} \le 1.$ So $\sigma(a) \subset \overline{\D}$.
 Since $n_0 = 1$ and $\epsilon \le 1$, the equation \eqref{eq:4.1} for $k=0$ implies that $a$ is invertible. We have 
 \begin{equation}\label{eq:here}
 \|a^{-n_k}\| \le \|a^{-n_k} - e\| + 1 
       = \| a^{-n_k}(e-a^{n_k})\| + 1 
        \le \| a^{-n_k}\|\eps + 1.
 \end{equation}
 Therefore 
 $ \|a^{-n_k}\| \le \frac{1}{1-\eps}$
 which implies that $r(a^{-1})\le 1$ and $\sigma(a^{-1}) \subset \overline{\D}$. Thus $r(a) = 1$ and $\sigma(a) \subset \T$. Let $\lambda \in \sigma(a)$. Then $|\lambda|=1$ and $\lambda^{n_k}-1 \in \sigma(a^{n_k}-e)$ for every $k$. Hence
 $$|\lambda^{n_k}-1| \le r(a^{n_k}-e) \le \| a^{n_k}-e\| < \eps,$$
 for every $k$. Since $((n_k)_{k\ge 0},\eps)$ is a Jamison pair, we obtain that $\lambda = 1$. 
 Thus $\sigma(a) = \{1\}$.

In a Banach algebra it is possible to define the logarithm (principal branch) of some elements $x \in A$ by the holomorphic functional calculus. In particular, for $x\in A$ with $\|x-e\| < 1$ we have
$$ \ln (x) = \sum_{j=1}^{\infty} \frac{(-1)^{j-1}}{j}(x-e)^j .$$
As $\|a^{n_k} - e\| < \eps$, we can write 
$$\ln (a^{n_k}) = \sum_{j=1}^{\infty} \frac{(-1)^{j-1}}{j}(a^{n_k}-e)^j $$
and thus 
\begin{align}\label{eq:est}
\norme{\ln (a^{n_k})} &\le \sum_{j=1}^{\infty} \frac{1}{j}\norme{(a^{n_k}-e)^j} \le \sum_{j=1}^{\infty} \frac{1}{j}\norme{a^{n_k}-e}^j 
     \le \sum_{j=1}^{\infty} \eps^j \le \frac{\eps}{1-\eps}
\end{align}
for every $k$. The principal branch of the logarithm satisfies the identity 
$$ \ln (z^j) = j\ln (z) \text{ whenever } -\frac{\pi}{j} < \Arg (z) \le \frac{\pi}{j}$$ where $\Arg (z)\in (-\pi,\pi)$.
Since $\sigma(a) = \{1\}$, we have $\ln (z^{n_k}) = n_k\ln (z)$ in a neighborhood of the spectrum of $a$. 
By the classical properties of the holomorphic functional calculus in a Banach algebra we have $\ln (a^{n_k}) = n_k\ln (a)$. Therefore, using \eqref{eq:est}, 
$$ n_k\norme{\ln (a)} = \norme{\ln (a^{n_k})} \le \frac{\eps}{1-\eps}$$
for every $k$. Thus $\ln (a) = 0$. Denoting by $\exp$ the exponential function, we have $\exp(\ln x) = x$ whenever $\norme{x-e} < 1$. Therefore $a = \exp(0) = e$.
This proves that $(A, (n_k)_{k\ge 0}, \varepsilon)$ is NSS.
\qed

\subsection{Explicit constants for sequences with bounded quotients}
A generalization of the result of Chernoff quoted in the introduction has been proved by Gorin in \cite{Gor}. It runs as follows: suppose that $A$ is a unital normed algebra, $0<\varepsilon < 1$ and the sequence $(n_k)_{k\ge 0}$ verifies
$$ \frac{n_{k+1}}{n_k} < \frac{\pi - \arcsin(\varepsilon/2)}{\arcsin(\varepsilon/2)}\quad\textrm{for every }k\ge 0.$$
Then $(A, (n_k)_{k\ge 0}, \varepsilon)$ is NSS. 
See also the paper \cite{KMOT} by Kalton, Montgomery-Smith, Oleszkiewicz and Tomilov.
As a consequence of Theorem \ref{thm:BanAlg}, we retrieve Gorin's result, as well as the following variant which was stated (in a slightly different form) and proved in \cite{KMOT}*{Corollary\,4.2}:

\begin{corollary}[explicit constants; \cite{KMOT}]\label{cor:kmot}
Let $A$ be a complex normed algebra and let $\nk$ be an increasing sequence of positive integers with $n_0 = 1$ and 
$$\sup_{k\ge 0} \frac{n_{k+1}}{n_k} \le c.$$ 
Then the triplet $(A,\nk,\min(2\sin(\pi/(c+1)),1))$ is NSS for complex normed algebras. Thus the triplet $(A,\nk,2\sin(\pi/(c+1)))$ is NSS for $c\ge 5$ and $(A,\nk,1)$ is NSS whenever $c < 5$.
\end{corollary}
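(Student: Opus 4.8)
The plan is to assemble two results already established above: Proposition~\ref{prop:bddq}, which produces a Jamison pair out of the bounded-quotient hypothesis, and the implication (iii)$\Rightarrow$(i) of Theorem~\ref{thm:BanAlg}, which upgrades any Jamison pair with constant at most $1$ into an NSS statement valid for \emph{every} complex normed algebra. No new argument is needed: the genuine work — the holomorphic-logarithm estimate with explicit control of the constants, essentially the content of \cite{KMOT} — has already been absorbed into Theorem~\ref{thm:BanAlg}. What is left is bookkeeping with the constant.

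Concretely, I would set $\eps:=\min\bigl(2\sin(\pi/(c+1)),1\bigr)$, so that $0<\eps\le 1$. By Proposition~\ref{prop:bddq} the pair $\bigl(\nk,2\sin(\pi/(c+1))\bigr)$ is a Jamison pair, i.e.\ \eqref{eq:sup} holds with $2\sin(\pi/(c+1))$ in place of $\varepsilon$ for every $\ld\in\T\setminus\{1\}$. Since $\eps\le 2\sin(\pi/(c+1))$, the supremum in \eqref{eq:sup} is a fortiori $\ge\eps$ for every such $\ld$, so $(\nk,\eps)$ is again a Jamison pair, now with $\eps\le 1$. The implication (iii)$\Rightarrow$(i) of Theorem~\ref{thm:BanAlg} then gives at once that $(A,\nk,\eps)$ is NSS for every complex normed algebra $A$, which is the first assertion.

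For the two explicit statements it remains only to identify $\eps$. Since $\nk$ is strictly increasing with $n_0=1$, we have $n_1\ge 2$, hence $c\ge n_1/n_0\ge 2$ and $\pi/(c+1)\in(0,\pi/3]$, an interval on which $\sin$ is increasing; thus $c\mapsto 2\sin(\pi/(c+1))$ is non-increasing on $[2,+\infty)$ and equals $2\sin(\pi/6)=1$ at $c=5$. Hence $2\sin(\pi/(c+1))\le 1$ exactly when $c\ge 5$: for $c\ge 5$ we get $\eps=2\sin(\pi/(c+1))$, so $(A,\nk,2\sin(\pi/(c+1)))$ is NSS, whereas for $c<5$ we get $\eps=1$, so $(A,\nk,1)$ is NSS.

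I do not anticipate a real obstacle here; the corollary is a formal consequence of Proposition~\ref{prop:bddq} and Theorem~\ref{thm:BanAlg}. The only point requiring (elementary) care is the monotonicity computation pinning the transition at $c=5$, together with the observation that the hypothesis already forces $c\ge 2$, so that $\pi/(c+1)$ stays in the range where $\sin$ is monotone. One should also note that the truncation by $1$ in the definition of $\eps$ is unavoidable — taking $a=\delta e$ with $\delta$ small shows that the constant in any NSS triplet can never exceed $1$ — which is why $\min(2\sin(\pi/(c+1)),1)$, rather than $2\sin(\pi/(c+1))$ alone, is the correct bound for small $c$.
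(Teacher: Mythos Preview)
Your proposal is correct and follows exactly the route the paper takes: the paper's proof consists of the single sentence ``The proof follows from Theorem~\ref{thm:BanAlg} and Proposition~\ref{prop:bddq},'' and you have simply unpacked this, adding the elementary monotonicity check that locates the transition at $c=5$.
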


\begin{proof}
The proof follows from Theorem \ref{thm:BanAlg} and Proposition~\ref{prop:bddq}.
\end{proof}

\subsection{Spectral characterizations of Jamison sequences}
 The following result, which is in part a strength\-ening of Theorem~\ref{thm:BanAlg}, provides other spectral characterizations of Jamison sequences. If $T$ is a bounded operator on a complex  Banach space $X$, we denote by $\sigma(T)$ the spectrum of $T$, and by  $\sigma_p(T)$ its \emph{point spectrum} (\emph{i.e.} the set of its  eigenvalues). The set $\sigma_p(T) \cap \T$ of all eigenvalues of modulus $1$ of $T$ is called the \emph{unimodular point spectrum} of $T$.

\begin{theorem}[spectral characterizations]\label{thm:spectral}
 Let $(n_k)_{k\ge 0}$ be a  sequence of  integers with $n_0 = 1$. 
 The following assertions are equivalent:
 \begin{itemize}
 \item[(i)] \nk is a Jamison sequence;
 
\item[(ii)] There exists $\eps \in ]0,1]$ such that for every complex normed algebra $A$ with unit $e$ we have 
$$ \sup_{k\ge 0}\|a^{n_k} - e\| < \eps \Longrightarrow a = e;$$
 
  \item[(iii)] There exists $\eps \in ]0,1]$ such that for every complex Banach algebra $A$ with unit $e$ we have 
 $$ \sup_{k\ge 0}\|a^{n_k} - e\| < \eps \Longrightarrow \sigma(a) \textrm{ is countable;}$$
 
   \item[(iv)] There exists $\eps \in ]0,1]$ such that for any bounded operator $T$ on a complex separable Banach space $X$, we have 
   $$ \sup_{k\ge 0}\|T^{n_k} - I\| < \eps \Longrightarrow \sigma(T) \textrm{ is countable;}$$
   
      \item[(v)] There exists $\eps \in ]0,1]$ such that for any bounded operator $T$ on a complex separable Hilbert space $H$, we have
   $$ \sup_{k\ge 0}\|T^{n_k} - I\| < \eps \Longrightarrow \sigma(T) \textrm{ is countable;}$$
    
   \item[(vi)]  There exists $\eps \in ]0,1]$ such that for any bounded operator $T$ on a complex separable Hilbert space $H$, we have 
 $$ \sup_{k\ge 0}\|T^{n_k} - I\| < \eps \Longrightarrow \sigma_p(T) \textrm{ is countable;}$$

   \item[(vii)]  There exists $\eps \in ]0,1]$ such that for any bounded operator $T$ on a complex separable Hilbert space $H$, we have 
 $$ \sup_{k\ge 0}\|T^{n_k} - I\| < \eps \Longrightarrow \sigma_p(T)\cap \T \textrm{ is countable}.$$

 \end{itemize}
 \end{theorem}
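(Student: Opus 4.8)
The plan is to establish the cycle of implications $(i)\Rightarrow(iii)\Rightarrow(iv)\Rightarrow(v)\Rightarrow(vi)\Rightarrow(vii)\Rightarrow(i)$, together with the easy insertions $(ii)$ into the chain. First I would dispatch $(i)\Rightarrow(ii)$: this is exactly the content of Theorem~\ref{thm:BanAlg}, since a Jamison sequence admits some $\eps\le 1$ making $(\nk,\eps)$ a Jamison pair, and then $(A,\nk,\eps)$ is NSS for every complex normed algebra. The implication $(ii)\Rightarrow(i)$ is also immediate by specializing $A=\C$ and invoking Theorem~\ref{thm:1.2}, so $(i)\Leftrightarrow(ii)$ comes for free and I will use it to feed the rest.

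Next I would prove $(i)\Rightarrow(iii)$. Fix $\eps\in\,]0,1]$ with $(\nk,\eps)$ a Jamison pair and suppose $\sup_k\|a^{n_k}-e\|<\eps$ in a complex Banach algebra $A$. Exactly as in the proof of Theorem~\ref{thm:BanAlg}, one shows $r(a)=1$, that $a$ is invertible with $\|a^{-n_k}\|\le 1/(1-\eps)$, and hence $\sigma(a)\subset\T$; moreover every $\lambda\in\sigma(a)$ satisfies $|\lambda^{n_k}-1|\le\|a^{n_k}-e\|<\eps$ for all $k$. Thus $\sigma(a)\subset\Lambda_\eps:=\{\lambda\in\T:\sup_k|\lambda^{n_k}-1|<\eps\}$, which is countable by Theorem~\ref{thm:1.6}. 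This gives $(iii)$. The implication $(iii)\Rightarrow(iv)$ is just the observation that $\mathcal{B}(X)$ is itself a complex Banach algebra with unit $I$, so the hypothesis on $T$ is the hypothesis of $(iii)$ applied to $a=T$; and $(iv)\Rightarrow(v)\Rightarrow(vi)\Rightarrow(vii)$ are trivial, the first because a Hilbert space is a Banach space (and separable Hilbert spaces form a subclass of separable Banach spaces), the next two because $\sigma_p(T)\subset\sigma(T)$ and $\sigma_p(T)\cap\T\subset\sigma_p(T)$, so countability of the larger set forces countability of the smaller.

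The one implication that carries genuine content is $(vii)\Rightarrow(i)$, and I expect this to be the main obstacle. The task is: assuming there is some $\eps\in\,]0,1]$ such that every bounded operator $T$ on a separable Hilbert space with $\sup_k\|T^{n_k}-I\|<\eps$ has countable unimodular point spectrum, deduce that $\nk$ is Jamison. By Theorem~\ref{thm:1.6} it suffices to show $\Lambda_{\eps'}$ is countable for some $\eps'>0$; the natural choice is $\eps'=\eps$. Suppose for contradiction that $\Lambda_\eps$ is uncountable. I would then run the standard Jamison-type construction: pick an uncountable subset $\{\lambda_j\}_{j\in J}\subset\Lambda_\eps$, form a weighted $\ell^2$-type space (or a suitable subspace of a space of the form $\bigoplus$ of one-dimensional pieces) on which the diagonal operator $T$ with eigenvalues $\lambda_j$ is bounded and satisfies $\sup_k\|T^{n_k}-I\|\le\sup_k\sup_j|\lambda_j^{n_k}-1|\le\eps$ — here one must be slightly careful, since the definition of $\Lambda_\eps$ uses a strict inequality for each fixed $\lambda$ but the supremum over $j$ could reach $\eps$; this is circumvented by first passing to $\Lambda_{\eps-\delta}$ for small $\delta>0$, noting that if $\Lambda_\eps$ is uncountable then so is $\Lambda_{\eps-\delta}$ for some $\delta$ (as $\Lambda_\eps=\bigcup_m\Lambda_{\eps-1/m}$), and working with $\eps-\delta$ so that the uniform bound is strictly below $\eps$. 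The resulting $T$ is a bounded operator on a separable Hilbert space with $\sup_k\|T^{n_k}-I\|<\eps$ and uncountable unimodular point spectrum, contradicting $(vii)$. Hence $\Lambda_{\eps-\delta}$ is countable, so $\nk$ is Jamison. The delicate points here are the correct choice of the underlying Hilbert space so that $T$ is genuinely bounded (a weighted direct sum with rapidly decaying weights indexed so that the eigenvalue equations persist) and the verification that the constructed $T$ lands in the separable Hilbert space setting required by $(vii)$; both are by now routine variants of the Jamison/Ransford--Roginskaya construction, but writing them cleanly is where the work lies.
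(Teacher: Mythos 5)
Your architecture and all the easy steps match the paper: (i)$\Leftrightarrow$(ii) is Theorem~\ref{thm:BanAlg}, and the descent (ii)$\Rightarrow$(iii)$\Rightarrow\cdots\Rightarrow$(vii) is routine (in fact (ii)$\Rightarrow$(iii) is immediate since $a=e$ gives $\sigma(a)=\{1\}$, so your detour through $\Lambda_\eps$ and Theorem~\ref{thm:1.6} is unnecessary, though harmless). The problem is the implication (vii)$\Rightarrow$(i), which you correctly identify as the one carrying content but for which your proposed construction fails.

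The operator you describe --- a diagonal operator with uncountably many prescribed unimodular eigenvalues on a ``weighted direct sum of one-dimensional pieces'' --- cannot exist on a separable Hilbert space. A direct sum over an uncountable index set is non-separable, and any operator that is diagonal with respect to an orthogonal system (more generally, any normal operator) on a separable Hilbert space has countable point spectrum, because eigenvectors associated to distinct eigenvalues are orthogonal and a separable space admits only countably many pairwise orthogonal nonzero vectors. No choice of ``rapidly decaying weights'' circumvents this: to obtain uncountable unimodular point spectrum one is forced to use a non-normal operator whose eigenvectors are far from orthogonal. This is precisely what the paper does. It invokes \cite{EiGr}*{Theorem\,2.1}: when \nk is not a Jamison sequence, one can build $T=D+B$ on $\ell_2(\N)$, where $D$ is diagonal with eigenvalues taken from a perfect compact set $K\subset\T$ that is separable for the metric $d_{(n_k)}(\lambda,\mu)=\sup_{k\ge 0}|\lambda^{n_k}-\mu^{n_k}|$, and $B$ is a weighted backward shift whose weights are tuned so that $\sup_{k}\|T^{n_k}-D^{n_k}\|<\eps/2$ while $K$ is chosen so that $\sup_k\|D^{n_k}-I\|<\eps/2$; the uncountably many eigenvectors of $T$ arise as infinite series produced by the interaction of $D$ and $B$. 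This is a genuinely nontrivial external input, not a routine variant of the diagonal construction, and without it (or an equivalent substitute) your proof of (vii)$\Rightarrow$(i) does not go through.
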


\subsection{Proof of Theorem \ref{thm:spectral}}
Let \nk be a Jamison sequence and let $\eps \in ]0,1]$ be such that $(\nk,\eps)$ is a Jamison pair. It was proved in Theorem \ref{thm:BanAlg} that $(A, (n_k)_{k\ge 0}, \varepsilon)$ is NSS for any complex normed algebra $A$. This shows the implication (i) $\Rightarrow$ (ii). The implications 
(ii)$\Rightarrow$ (iii) $\Rightarrow$ (iv) $\Rightarrow$ (v) $\Rightarrow$ (vi) $\Rightarrow$ (vii)
are obvious. 

Suppose now that  (vii) holds true for some constant $\varepsilon\in (0,1]$, that is,  for every  $T\in\mathcal{B}(H)$ we have that
 $$ \sup_{k\ge 0}\|T^{n_k} - I\| \le \varepsilon \Longrightarrow \sigma_p(T) \textrm{ is countable.}$$ 
 We want to show that \nk is a Jamison sequence; the proof is a modification of a construction in \cite{EiGr}.
Suppose, for the sake of contradiction, that \nk is not a Jamison sequence. Denote by $(e_{n})_{n\ge 1}$ the canonical basis of $\ell_{2}(\N)$. It is proved in \cite{EiGr}*{Theorem\,2.1} that there exists an operator $T$ on $\ell_{2}(\N)$ with uncountable unimodular point spectrum such that $\sup_{k\ge 0}||T^{n_{k}}||<+\infty$. More precisely, $T$ has the form $T=D+B$, where $D$ is a diagonal operator and $B$ is a weighted backward shift with respect to the basis $(e_{n})_{n\ge 1}$. We have $De_{n}=\lambda _{n}e_{n}$ for every $n\ge 1$, where the $\lambda _{n}$'s are distinct complex numbers with $|\lambda _{n}|=1$, and $Be_{1}=0$, $Be_{n}=\alpha _{n-1}e_{n-1}$, $n\ge 2$, where the $\alpha _{n}$'s are certain positive weights. The diagonal coefficients $\lambda _{n}$ are chosen using the fact that $(n_{k})_{k\ge 0}$ is a non-Jamison sequence, and belong to a perfect compact subset $K$ of $\T$.  
Notice that what we denote here by $K$ is called $K'$ in \cite{EiGr} and that it is proved in \cite{EiGr} that this subset is separable for the metric on $\T$ defined by 
$d_{(n_{k})}(\lambda ,\mu )=\sup_{k\ge 0}|\lambda ^{n_{k}}-\mu ^{n_{k}}|$, $\lambda,\mu\in\T$.

 Let $\varepsilon>0$ be an arbitrarily small  number. Although it is not used in the proof of \cite{EiGr}*{Theorem\,2.1}, one can suppose without loss of generality that $d_{(n_{k})}(\lambda ,1 )<\varepsilon/2 $ for every $\lambda \in K$. 
Thus $\sup_{k\ge  0}|\lambda_{n} ^{n_{k}}-1|<\varepsilon/2 $ for every $n\ge 1$, so that
$\sup_{k\ge 0}||D^{n_{k}}-I||<\varepsilon /2$. Moreover, the proof of \cite{EiGr}*{Theorem\,2.1} shows that the construction can be carried out in such a way that 
$\sup_{k\ge 0}||T^{n_{k}}-D^{n_{k}}||<\varepsilon/2 $. 
Putting these two estimates together yields that $\sup_{k\ge 0}||T^{n_{k}}-I||<\varepsilon$.
\par\medskip
We have thus shown that if $(n_{k})_{k\ge 0}$ is not a Jamison sequence, there exists  for every $\varepsilon >0$ a Hilbert space operator $T$ with $\sigma _{p}(T)\cap\T$ uncountable such that $\sup_{k\ge 0}||T^{n_{k}}-I||<\varepsilon $. This contradiction shows that $(n_{k})_{k\ge 0}$ has to be a Jamison sequence.

\subsection{Jamison sets of real numbers}\label{sect:inR}
Some of the results presented until now have analogues for Jamison sequences (or sets) in other semigroups than $\N$. We present here a consequence of Theorem~\ref{thm:BanAlg} for Jamison sets in $[0,+\infty)$. 

\begin{definition}
Let $E$ be a set of nonnegative real numbers. We say that $E$ is a \textit{Jamison set} in $\R$ if for every separable complex Banach space $X$ and for every $C_0$-semigroup $(T_t)_{t\ge0}$ of bounded linear operators on $X$ (with infinitesimal generator $A$) which is partially bounded with respect to the set $E$, that is $\sup_{t\in E}\Vert T_t\Vert<+\infty$, the set $\sigma_p(A)\cap i\mathbb{R}$ is countable.
\end{definition}

Observe that a Jamison subset of $[0,+\infty)$ is necessarily unbounded. 
We may, without any loss of generality, suppose that $0\in E$. Since a Jamison set is unbounded, $E$ contains a nonzero element. By dividing with this number we can, without any loss of generality, suppose also that $1\in E$. We recall now a characterization, obtained in \cite{Dev}*{Lemma 3.8 and Theorem 3.9},
of Jamison sets of positive real numbers.
Let $F = \{\lfloor t\rfloor : t\in E\}$, where $\lfloor x\rfloor$ denotes the largest integer less or equal to the real number $x$. Since $\{0,1\} \subset E$, both $F\setminus \{0\}$ and $F+1 := \{f+1 : f\in F\}$ are sets of positive integers containing $1$. It therefore makes sense to speak about Jamison sets in $\N$ for these two sets: we say for instance that $F+1$ is a Jamison set (in $\N$) if the strictly increasing sequence of its elements, $(n_k)_{k\ge0}$, starting from $n_0=1$, is a Jamison sequence.

\begin{theorem}[characterization of Jamison sets in $\R$; \cite{Dev}]\label{JamisonR}
Let $E\subset [0,+\infty)$ be a set of real numbers such that $\{0,1\}\subset E$. The following assertions are equivalent$:$
\begin{enumerate}
\item[$(1)$] $E$ is a Jamison set in $\R;$
\item[$(2)$] $F\setminus \{0\}$ is a Jamison set in $\N;$
\item[$(2)$] $F+1$ is a Jamison set in $\N$.
\end{enumerate}
\end{theorem}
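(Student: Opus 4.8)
The plan is to reduce the $C_0$-semigroup statement to the discrete setting already understood via Theorem~\ref{thm:BanAlg} and the characterizations of Jamison sequences, by passing from the semigroup $(T_t)_{t\ge 0}$ to the single operator $T_1$ and relating the point spectrum of the generator $A$ to the point spectrum of $T_1$. First I would recall the classical spectral inclusion for $C_0$-semigroups: if $A\xi = is\xi$ for some $s\in\R$ and $\xi\neq 0$, then $T_t\xi = e^{ist}\xi$, so $e^{is}\in \sigma_p(T_1)\cap\T$; moreover distinct values of $s$ in $[0,2\pi)$ give distinct eigenvalues $e^{is}$ of $T_1$, and the full imaginary eigenvalue set $\sigma_p(A)\cap i\R$ fibers over $\sigma_p(T_1)\cap\T$ with each fiber a coset of $2\pi i\Z$, hence countable. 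Consequently $\sigma_p(A)\cap i\R$ is countable if and only if $\sigma_p(T_1)\cap\T$ is countable.

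Next I would handle the quantitative estimate linking $\sup_{t\in E}\norme{T_t}$ to $\sup_{k\ge 0}\norme{T_1^{\,n_k}}$ where $(n_k)$ enumerates $F+1$. The key observation is that for $t\in E$ with $\lfloor t\rfloor = f$, we have $T_1^{\,f} = T_f = T_t \circ T_{t-f}^{-1}$... but $T_{t-f}$ need not be invertible; instead one writes $T_{f+1} = T_{t}\,T_{f+1-t}$ with $0 < f+1-t \le 1$, so $\norme{T_1^{\,f+1}} = \norme{T_{f+1}} \le \norme{T_t}\,\norme{T_{f+1-t}} \le M_0\sup_{t\in E}\norme{T_t}$, where $M_0 := \sup_{0\le s\le 1}\norme{T_s} < +\infty$ by strong continuity and the uniform boundedness principle. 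This shows $\sup_{k\ge 0}\norme{T_1^{\,n_k}} < +\infty$ whenever $(T_t)$ is partially bounded with respect to $E$, so by definition of a Jamison sequence applied to $T_1$, if $F+1$ is a Jamison sequence then $\sigma_p(T_1)\cap\T$ is countable, hence $\sigma_p(A)\cap i\R$ is countable; this gives $(3)\Rightarrow(1)$. The implication $(2)\Rightarrow(3)$ (and conversely) is elementary: $F+1$ and $F\setminus\{0\}$ differ by the fixed shift of adding $1$ to each element, and one checks from Theorem~\ref{thm:1.2} that the supremum condition $\sup_k\abs{\ld^{n_k}-1}\ge\eps$ is stable under such a shift, because $\abs{\ld^{m+1}-1}$ and $\abs{\ld^{m}-1}$ together control each other up to $\abs{\ld-1}$, which is itself bounded below once it is small (or use that $1\in F$ forces $n_0=1$ in both enumerations and run the argument on $\{n_k - 1 : n_k\in F+1\} = F$).

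For the reverse direction $(1)\Rightarrow(3)$, I would argue contrapositively: if $F+1$ is not a Jamison sequence, use the construction from \cite{EiGr} (as invoked in the proof of Theorem~\ref{thm:spectral}) to produce a Hilbert space operator $S = D+B$ on $\ell_2(\N)$, with $D$ diagonal with unimodular eigenvalues $\ld_n$ forming an uncountable set and $B$ a weighted backward shift, such that $\sup_{k\ge 0}\norme{S^{n_k}} < +\infty$ and $\sigma_p(S)\cap\T$ is uncountable. The task is then to ``dilate'' or ``interpolate'' $S$ to a $C_0$-semigroup $(T_t)_{t\ge 0}$ with $T_1$ closely related to $S$ and with $\sup_{t\in E}\norme{T_t}<+\infty$. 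The natural device is to take the diagonal part first: choose $\mu_n\in i\R$ with $e^{\mu_n}=\ld_n$ and set $\tilde D_t = \mathrm{diag}(e^{\mu_n t})$, a $C_0$-group of unitaries on $\ell_2(\N)$ with $\sigma_p$ of its generator uncountable; then perturb by a bounded operator producing the backward-shift part, using that a bounded perturbation of a generator is a generator (the Phillips perturbation theorem) and that the $E$-boundedness estimates can be transferred from the discrete estimates $\sup_k\norme{S^{n_k}-D^{n_k}}<\eps$ via a Duhamel/variation-of-constants expansion on the interval $[0,1]$. The main obstacle is precisely this last step: ensuring that the continuous-time perturbed semigroup inherits the delicate norm control along the set $E$ (not merely along integers), since $E$ may meet every unit interval, so one must bound $\norme{T_t}$ for all $t\in E$, not just $t\in\N$; here I expect to need that $E\cap[0,1]$ contributes only a bounded factor $M_0$ as above, combined with the multiplicative structure $T_t = T_{\lfloor t\rfloor}T_{\{t\}}$ and the discrete bound on $\norme{T_1^{\,n_k}}$, so that the whole argument closes. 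Alternatively, and more cheaply, one may bypass the reverse direction's explicit construction by citing \cite{Dev}*{Lemma 3.8 and Theorem 3.9} directly, since Theorem~\ref{JamisonR} is attributed there; in the paper's context the cleanest exposition is to present $(1)\Leftrightarrow(2)\Leftrightarrow(3)$ as recalled from \cite{Dev}, with the forward direction $(3)\Rightarrow(1)$ reproved in the few lines above to make the connection with Theorem~\ref{thm:BanAlg} transparent.
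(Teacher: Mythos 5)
The first thing to say is that the paper does not prove this theorem at all: it is explicitly recalled from \cite{Dev}*{Lemma 3.8 and Theorem 3.9}, and the surrounding text (``We recall now a characterization, obtained in \cite{Dev}\dots'') makes clear that the statement is imported, not established. So your closing remark --- that the cleanest exposition is to cite \cite{Dev} and, at most, re-derive the forward direction to connect it with Theorem~\ref{thm:BanAlg} --- is exactly what the paper does (indeed the paper does even less: it offers no re-derivation). Your sketch of $(3)\Rightarrow(1)$ is sound: the point-spectral mapping $A\xi=is\xi\Rightarrow T_t\xi=e^{ist}\xi$, the countability of the fibers of $is\mapsto e^{is}$ over $\sigma_p(T_1)\cap\T$, and the estimate $\norme{T_1^{f+1}}=\norme{T_t T_{f+1-t}}\le M_0\sup_{t\in E}\norme{T_t}$ with $M_0=\sup_{0\le s\le1}\norme{T_s}<+\infty$ are all correct, and both $F\setminus\{0\}$ and $F+1$ do contain $1$ (since $\{0,1\}\subset E$), so Definition~\ref{def:jam} applies. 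The equivalence $(2)\Leftrightarrow(3)$ via $\abs{\ld^{f}-1}\le\abs{\ld^{f+1}-1}+\abs{\ld-1}$ and Theorem~\ref{thm:1.2} also works, although your phrase ``bounded below once it is small'' should read that $\abs{\ld-1}$ is bounded \emph{above} by $\eps$ because $1$ lies in both index sets.

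The genuine gap is the one you yourself flag: the direction $(1)\Rightarrow(3)$ (equivalently $(1)\Rightarrow(2)$). Your proposed route --- take the Eisner--Grivaux operator $S=D+B$ with $\sup_k\norme{S^{n_k}}<+\infty$ and uncountable unimodular point spectrum, embed the diagonal part into a unitary group, and recover $B$ by a bounded (Phillips/Duhamel) perturbation of the generator --- does not close as stated. A bounded perturbation of the diagonal generator produces a semigroup whose time-one map is \emph{not} $D+B$ in any controlled way, and the delicate norm estimates of \cite{EiGr} are tied to the specific algebraic form of $(D+B)^{n_k}$; they do not transfer through a variation-of-constants expansion. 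This is precisely why \cite{Dev} carries out a continuous-time analogue of the construction from scratch rather than interpolating the discrete operator. Since the theorem is attributed to \cite{Dev} and the paper itself supplies no proof, deferring this direction to the citation is legitimate; but as a self-contained proof your proposal is incomplete at exactly this point.
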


Our aim is now to prove the following result, which gives a strong property of Jamison subsets $E \subset [0,+\infty)$ which contain a neighborhood of $0$ in $[0,+\infty)$. Without any loss of generality we can suppose that $[0,1] \subset E$.

If $E$ is a Jamison set with $[0,1] \subset E \subset [0,+\infty)$, let $0<\varepsilon_{F}\le 1$ denote a Jamison constant for the sequence $\nk$, with $F+1 = \{1,n_1,n_2, \cdots\}$. We have the following result:

\begin{theorem}[Jamison sets and $C_0$-semigroups]\label{characterizationR}
Suppose that $[0,1] \subset E \subset [0,+\infty)$ and that $E$ is a Jamison set in $\R$.
Let $\varepsilon>0$ be such that $\varepsilon <\min\left(\varepsilon_{F}/3,1/3\right)$. Let $X$ be a  complex Banach space and let $(T_t)_{t\ge0}$ be a $C_0$-semigroup of bounded linear operators on $X$ such that 
\begin{equation}\label{condition}
\Vert T_t-{I}\Vert\le \varepsilon \quad\textrm{ for every } t\in E.
\end{equation}
Then $T_t={I}$ for every  $t\ge0$.
\end{theorem}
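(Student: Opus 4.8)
The plan is to reduce the continuous-parameter statement to the discrete case already treated in Theorem~\ref{thm:BanAlg}, applied in a suitable Banach algebra of operators. First I would fix the $C_0$-semigroup $(T_t)_{t\ge 0}$ satisfying \eqref{condition} and work inside the unital Banach algebra $A = \overline{\mathrm{alg}}\{T_t : t\ge 0\} \subset \mathcal{B}(X)$, or simply in $\mathcal{B}(X)$ itself, with unit $e = I$. The key observation is that condition \eqref{condition} on the real set $E$ transfers to a condition on the sequence $\nk$ defining $F+1$: if $n_k \in F+1$, write $n_k = f_k+1$ with $f_k = \lfloor t_k\rfloor$ for some $t_k\in E$, and note $t_k - f_k \in [0,1) \subset E$ as well (using $[0,1]\subset E$), so both $t_k$ and $t_k - f_k$ lie in $E$. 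Then $T_{f_k} = T_{t_k} T_{-(t_k-f_k)}$-type manipulations are not available (no inverses a priori), so instead I would estimate $\|T_{n_k} - I\|$ directly: since $E \supset [0,1]$ and $E$ is closed under the semigroup law only in the sense that sums of its elements need not be in $E$, I would rather use that $n_k = f_k + 1$ and $f_k \le t_k < f_k+1$, hence $T_{f_k+1} = T_{t_k + (f_k+1-t_k)} = T_{t_k}\,T_{f_k+1-t_k}$ with $f_k+1-t_k \in (0,1] \subset E$; therefore $\|T_{n_k} - I\| \le \|T_{t_k}\|\,\|T_{f_k+1-t_k}-I\| + \|T_{t_k}-I\| \le (1+\varepsilon)\varepsilon + \varepsilon \le 3\varepsilon < \varepsilon_F$ by the choice $\varepsilon < \min(\varepsilon_F/3, 1/3)$.

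Having arranged $\sup_{k\ge 0}\|(T_1)^{n_k} - I\| \le 3\varepsilon < \varepsilon_F$, where $\varepsilon_F$ is a Jamison constant for $\nk$ (which is a Jamison sequence by Theorem~\ref{JamisonR}, since $E$ is Jamison in $\R$), I would apply Theorem~\ref{thm:BanAlg} with the operator $a = T_1 \in \mathcal{B}(X)$: since $(\nk, \varepsilon_F)$ is a Jamison pair with $\varepsilon_F \le 1$ and $\|a^{n_k}-e\| < \varepsilon_F$ for all $k$, we conclude $T_1 = I$. More generally, the same argument applied to the rescaled semigroup $(T_{st})_{t\ge 0}$ for any fixed $s > 0$ — noting that $[0,s]\subset sE$ and $sE$ is again a Jamison set with the same associated sequence structure, or more simply applying the estimate to $T_{s}$ in place of $T_1$ after observing $\|T_{s\,n_k} - I\|$ can be controlled for $s\le 1$ — would give $T_s = I$ for all $s \in (0,1]$, and then the semigroup property forces $T_t = I$ for all $t\ge 0$.

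The cleanest route for the last step: once $T_1 = I$, rescale. For $0 < s \le 1$, the set $E_s := E \cap [0,s]$ contains $[0,s]$, and applying the whole argument to the semigroup $(T_{st})_t$, whose defining set contains $[0,1]$ after dividing by $s$, yields $T_s = I$. Since $\{s : T_s = I\}$ is a closed sub-semigroup of $[0,+\infty)$ containing $(0,1]$, it is all of $[0,+\infty)$ by the semigroup law, and strong continuity gives $T_0 = I$ as well. I expect the main obstacle to be the bookkeeping in the first paragraph: making the passage from the real set $E$ to the integer sequence $F+1$ fully rigorous, in particular verifying that one genuinely controls $\|T_{n_k} - I\|$ using only hypotheses about $\|T_t - I\|$ for $t\in E$ and the inclusion $[0,1]\subset E$, and ensuring the constant $3$ in the estimate is the right one (hence the hypothesis $\varepsilon < \varepsilon_F/3$). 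The appeal to Theorem~\ref{JamisonR} to know $\nk$ is Jamison, and to Theorem~\ref{thm:BanAlg} to finish, are then immediate.
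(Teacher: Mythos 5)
Your reduction to the discrete case is correct and is essentially the paper's own argument: for $n_k=\lfloor t_k\rfloor+1\in F+1$ you factor $T_{n_k}=T_{t_k}T_{1-\{t_k\}}$ with both time parameters lying in $E$ (the paper writes the very same identity in the form $T_{1-\{t\}}(T_t-I)+T_{1-\{t\}}-I$), obtain $\sup_{k\ge0}\|T_1^{n_k}-I\|\le 2\varepsilon+\varepsilon^2<3\varepsilon<\varepsilon_F\le1$, and conclude $T_1=I$ from Theorem~\ref{JamisonR} together with Theorem~\ref{thm:BanAlg}. Up to that point there is nothing to object to.

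The final step, however, contains a genuine gap. ``Applying the whole argument to $(T_{st})_t$'' cannot be justified as you state it: the rescaled defining set $(E\cap[0,s])/s=[0,1]$ is bounded, hence not a Jamison set in $\R$ (Jamison sets are necessarily unbounded), so there is nothing to run the argument on. If instead you use all of $E/s$ — which \emph{is} a Jamison set, as one sees by conjugating the semigroup — its floor set $\lfloor E/s\rfloor+1$ is a different integer sequence whose Jamison constant need not be bounded below by $\varepsilon_F$, so the quantitative hypothesis $\varepsilon<\varepsilon_F/3$ does not transfer; your parenthetical claim that the rescaled set has ``the same associated sequence structure'' is false in general. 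Likewise $\|T_{sn_k}-I\|$ is not controlled by the hypotheses, since $sn_k\notin E$ in general. The missing — and much simpler — idea, which is what the paper uses, is this: once $T_1=I$, the semigroup is $1$-periodic, so $T_t=T_{\{t\}}$ and hence $\sup_{t\ge0}\|T_t-I\|\le\varepsilon<1$; then for each fixed $t$ one has $\sup_{k\ge0}\|T_t^{k+1}-I\|\le\varepsilon<1$, and since $((k+1)_{k\ge0},1)$ is a Jamison pair, Theorem~\ref{thm:BanAlg} applied to $a=T_t$ gives $T_t=I$ directly. (Your closing observation that $\{s:T_s=I\}$ is a closed subsemigroup containing $(0,1]$ would indeed finish the proof, but only after $T_s=I$ has been established for $s\in(0,1]$, which is precisely the point at issue.)
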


\begin{proof}[Proof of Theorem~\ref{characterizationR}] 
Since $[0,1]$ is a subset of $E$, \eqref{condition} implies that $\sup_{t\in [0,1]}\Vert T_t-I\Vert\le \varepsilon$. By the triangle inequality,  $\sup_{t\in [0,1]}\Vert T_t\Vert \le 1+\varepsilon \le 2$.
Using the notation $\{t\} = t-\lfloor t\rfloor$ for the fractional part of $t$, we have $\Vert T_{1-\{t\}}\Vert \le 2$ for every $t\ge 0$. Indeed, we have $1-\{t\} \in [0,1]$. Also, we have 
\begin{align*}
\Vert T_{\lfloor t\rfloor+1}-I\Vert=\Vert T_{t+1-\{t\}}-I\Vert &=\Vert T_{1-\{t\}} (T_{t}-I) + T_{1-\{t\}} - I\Vert\\
&\le \Vert T_{1-\{t\}}\Vert\,.\, \Vert T_t-I\Vert+\Vert T_{1-\{t\}}-I\Vert\le 3\varepsilon
\end{align*}
for every $t\ge0$. So
$
\Vert T^n_{1}-I\Vert = \Vert T_n-I\Vert\le 3\varepsilon
$  for every $n\in F+1$, where $F=\big\{\lfloor t\rfloor\, : \,t\in E\big\}$. Since $F+1$ is a Jamison set in $\N$ and $3\eps <\min\left(\varepsilon_{F},1\right)$, we obtain $T_1 = I$ by Theorem~\ref{thm:BanAlg}. Therefore $T_n = I$ for each $n\in \N$ and $T_t = T_{\{t\}}$ for every $t\ge0$. Since $\sup_{t\in [0,1]}\Vert T_t-I\Vert\le \varepsilon$, it follows that $\sup_{t\ge0}\Vert T_t-I\Vert\le \varepsilon$. We deduce that, for any fixed $t\ge0$, we have $\sup_{k\ge0}\Vert T_{t}^{k+1}-I\Vert<\varepsilon $.
Since $((k+1)_{k\ge 0},1)$ is a Jamison pair, it follows from Theorem \ref{thm:BanAlg} that $T_{t}=I$ for every $t$.
\end{proof}

 \section{NSS sequences in Lie groups}\label{Sec4}
We begin this section with the proof of Theorem \ref{Thm:Lie1}.

\begin{proof}[Proof of Theorem \ref{Thm:Lie1}]
Suppose  that \nk is a Jamison sequence with $n_0=1$. According to Corollary~\ref{Cor:Lie1}, which follows directly from Theorem~\ref{thm:BanAlg}, the sequence \nk is NSS for the class of linear Banach-Lie groups. We now want to show that \nk is NSS for a Lie group $G$, that is, that there exists an open neighborhood $V$ of the identity element $e$ of $G$ such that, if $g^{n_k} \in V$ for every $k\ge 0$, then $g=e$. Without loss of any generality we can assume that $G$ is connected. We denote by $\mathrm{ Ad } : G \mapsto GL(E)$ the adjoint representation of $G$ on $E= T_e(G)$, the tangent space at $e$ of $G$. Using Theorem~\ref{thm:BanAlg}, we obtain the existence of an open neighborhood $W$ in $GL(E)$ of the identity $I$ such that, for every $M\in GL(E)$, the following implication holds true: if
 $M^{n_k} \in W$ for every $k\ge 0$, then $ M=I$.
 Consider 
 $$W' = Ad^{-1}(W) = \{g \in G : Ad(g) \in W\},$$
 which is an open neighborhood of $e$ in $G$. An element $g\in G$ satisfying $g^{n_k}\in W'$ for every $k$ has the property that $\mathrm{Ad}(g^{n_k}) \in W$ for every $k$. As the adjoint representation is a group morphism, we have that $M:= \mathrm{Ad}(g)$ satisfies $M^{n_k} \in W$ for every $k$, and thus $\mathrm{Ad}(g) = M = I$. As the kernel of the adjoint representation is the center of the group, $\mathrm{Z}(G)$, we infer that $g \in \mathrm{Z}(G)$ (recall that the \emph{center} of $G$ is the set of all elements commuting with all elements of the group $G$). 
 
 The center $\mathrm{Z}(G)$ is an abelian locally compact group and it is a closed subgroup of the Lie group $G$. Therefore, $\mathrm{Z}(G)$ is an abelian Lie group. Consider the connected component $G_0$ of $\mathrm{Z}(G)$ containing the identity element $e$. 
It is a connected, abelian Lie group, and it is hence isomorphic, as a Lie group, to a group of the form $\K^{n}/\Gamma $ where $\Gamma $ is a lattice in $\K^{n}$. Here, $\K=\R$ or $\C$, depending on whether $G$ is a real or complex Lie group. The lattice $\Gamma $ has the form $\Gamma =\Z u_{1}+\cdots+\Z u_{r}$, where $u_{1},\dots,u_{r}$ are $\R$-independent vectors in $\K^{n}$ and $0\le r\le n$ if $\K=\R$, while $0\le r\le 2n$ if $\K=\C$.
\par\medskip
Let $||\,.\,|| $ be a norm on $\K^{n}$, and let $\varepsilon >0$. Let $E=\textrm{span}_{\R}\,[u_{1},\dots,u_{r}]$, and let $F\subseteq \K^{n}$ be a 
real subspace
such that $\K^{n}=E\oplus F$. Let $x\in \K^{n}$, which we write as $x=u+v=\sum_{i=1}^{r}\alpha _{i}u_{i}+v$, $u\in E$, $v\in F$, $\alpha _{1},\dots,\alpha _{r}\in \R$. Suppose that 
$\textrm{dist}(n_{k}x,\Gamma )=\inf_{\gamma \in\Gamma }||n_{k}x-\gamma ||<\varepsilon $
for every $k\ge 0$. Denote by $P$ the projection of $\K^{n}$ on $F$ along $E$. We have
$n_{k}||v|| = ||n_{k}Px||\le ||P||\varepsilon $ for every $k$. Hence $v=0$, and so $x=u\in E$. Then
\[
\inf_{a_{1},\dots,a_{r}\in\Z}\,\,\,\Bigl |\!\Bigl |\, \sum_{i=1}^{r}(n_{k}\alpha _{i}-a_{i})u_{i}\,\Bigr | \!\Bigr|<\varepsilon 
\]
for every $k\ge 0$. For every $i=1,\dots,r$, let $P_{i}$ denote the projection of $E$ onto the span of the vector $u_{i}$ along the space $\textrm{span}\,[u_{j}\,;\,j\neq i]$. Then $\inf_{a_{i}\in\Z}|n_{k}\alpha _{i}-a_{i}|<\varepsilon ||P_{i}||$. If $\varepsilon >0$ is so small that $\varepsilon 
||P_{i}||$ is less than the Jamison constant of the sequence $(n_{k})_{k\ge 0}$ for every $i=1,\dots,r$, we get that $\alpha _{i}\in\Z$ for every $i$, i.e.\ that $u\in\Gamma $. Hence the class of $x$ in the quotient $\K^{n}/\Gamma $ is $0$, \emph{i.e.}\ $(n_{k})_{k\ge 0}$ is NSS for the group $\K^{n}/\Gamma $. We have thus proved that Jamison sequences are NSS for connected abelian Lie groups, in particular for $G_{0}$.

 Let $V_0$ be an open neighborhood of $e$ in $G_0$, of the form $V_0 = V_1\cap \mathrm{Z}(G)$, with $V_1$ an open neighborhood of $e$ in $G$, such that, for every $g\in \mathrm{Z}(G)$, we have the following implication: if
 $g^{n_k} \in V_0$ for every $k$, then $g = e$.
 Let $V = V_1 \cap W'$ which is an open neighborhood of $e$. If $g\in G$ is such that $g^{n_k} \in V$ for every $k$, then $g^{n_k} \in \mathrm{Z}(G)\cap V_1 = V_0$, and so $g=e$.
\end{proof}

\begin{remark}\label{pasencoreecrite}
We do not know whether the proof above can be generalized to show that Jamison sequences are {NSS} for Banach-Lie groups. The theory of Banach-Lie groups differs from that of (finite-dimensional) Lie groups in several aspects. For instance, contrary to the case of Lie groups, closed subgroups of Banach-Lie  groups are not necessarily Banach-Lie groups (see for instance \cite{HofmannMorrisEMS}*{p.\,110}). 
Also, the description of connected abelian Lie groups used above is specific to Lie groups (see \cite{Neeb} and \cite{MT} for extensions to much more general settings). 

It should be also be mentioned that, according to \cite{LumVal} (see also \cite{BeNe}), a connected (finite-dimensional, real) Lie group $G$ has a continuous faithful embedding into the group of invertible elements of some Banach algebra with
its norm topology if and only if $G$ is a linear Lie group.
\end{remark}

\begin{remark}
We present here an elementary proof, using the Jordan canonical form of a matrix, that if $\nk$ is a Jamison sequence, then \nk is NSS for the class of compact Lie groups. Let $\eps \in (0,1)$ be such that $(\nk, \eps)$ is a Jamison pair. It is known (see for instance \cite{Segal}*{Section\,II}) that every compact Lie group is a matrix group. Let $A$ be a $n\times n$ matrix such that $\norme{A^{n_k} - I} \le \eps$ for every $k$. Then $\norme{A^{n_k}}\le 1+\eps$ and thus $\sigma(A)\subset\overline{\mathbb{D}}=\{\lambda\in\T\textrm{ ; }|\lambda|\le 1\}$. An  estimate similar to the one in Equation \eqref{eq:here} shows that $A$ is invertible and that $\sigma(A^{-1})\subset\overline{\mathbb{D}}$. Thus $\sigma(A)\subset\T$. In fact, the only possible eigenvalue of $A$ is $1$. Indeed, if $z$ is an eigenvalue for $A$, with normalized eigenvector $x$, then $\abs{z} = 1$ and 
$ \abs{z^{n_k} - 1} = \abs{z^{n_k} - 1}\|x\| = \norme{A^{n_k}x - x} \le \eps $ for every $k$.
Since $(\nk, \eps)$ is a Jamison pair, we obtain that $z=1$.

 Let now $L$ be an invertible matrix such that $J = L^{-1}AL$ is the Jordan canonical form of $A$, that is $J = L^{-1}AL$ has ones on the diagonal,
zeros and ones on the superdiagonal, and zeros elsewhere. Suppose $k$ is a positive integer between $1$ and $n-1$ such that the $(k,k+1)$ entry of the Jordan canonical form $J$ is $1$. Then, as a simple proof by induction shows, the $(k,k+1)$ entry of $J^p$ is $p$ for every positive integer $p$. This is in contradiction to the fact that the sequence of the norms $(\norme{J^{n_k}})_{k\ge 0}$ is bounded. 
Thus $J = L^{-1}AL$ is the identity matrix, and so the same is true for $A$. Therefore, \nk is NSS for the class of compact Lie groups.

A more direct argument can be given using a particular case of Gelfand's theorem (see \cite{Gel} or \cite{Zem}): a compact Lie group is isomorphic to a subgroup of a unitary group and the only unitary matrix $U$ whose spectrum is the singleton $\{1\}$ is the identity matrix. 
\end{remark}

We now move over to the proof of Theorem \ref{thm:Lie2}. 

\section{Proof of Theorem \ref{thm:Lie2}}\label{Sec5}

\subsection{Proof of Theorem \ref{thm:Lie2} for Banach-Lie groups}
The proof that sequences with bounded quotients are NSS for Banach-Lie groups is a generalization of the classical proof that Lie groups are NSS, see for instance \cite{HofmannMorrisEMS}*{Proposition 2.17}  or \cite{MorrisPestov}*{Theorem\,2.7}; for the convenience of the reader we briefly sketch the argument.
Let $G$ be a Banach-Lie group and let $\mathfrak{g}$ be the corresponding Banach-Lie algebra. Let \nk be a sequence of integers with $n_0 = 1$ and 
$$\sup_{k\ge 0} \frac{n_{k+1}}{n_k} \le c.$$ 
Without loss of generality, we can suppose that $c \ge 2$. 
Let $B$ be an open neighborhood of $0$ in $\mathfrak{g}$ for the Campbell-Hausdorff topology (see \cite{HofmannMorrisEMS} for the definitions) such that 
 there is an exponential function $\exp$ which is an homeomorphism from B onto on open neighborhood $V$ of $e$ in $G$, and is such that whenever
$X\ast Y$ belongs to $B$, $\exp(X\ast Y)=\exp(X)\exp(Y)$.
In other words, $B$ is a local Banach-Lie group with respect to  Hausdorff multiplication. 
Set $U=\exp(\frac{1}{c}B)$, which is an open neighborhood of $e$ in $G$, and let $h\not=e$ belong to $U$. We wish to show that there exists $k$ such that $h^{n_k}$ does not belong to $U$.
Let $\tilde{x}\in \frac{1}{c}B$ be such that $h=\exp(\tilde{x})$. Since $h\not=e$, $\tilde{x}\not = 0$. Let us show that there exists a $k$ such that $n_k \tilde{x}\in B\setminus \frac{1}{c}B$.  We have $n_1 \tilde{x}\in\frac{n_1}{c}B=\frac{n_1}{n_0}\frac{1}{c}B\subset B$. If $n_1 \tilde{x}\not\in
 \frac{1}{c}B$, we are done. Else $n_1 \tilde{x}\in
 \frac{1}{c}B$, and $n_2 \tilde{x}\in
 \frac{n_2}{n_1}\frac{1}{c}B\subset B$, so if $n_2 \tilde{x}\not\in
 \frac{1}{c}B$, we are also done. We continue in this fashion. Since we cannot have $n_k \tilde{x}\in B$ for every $k$, we deduce that there exists a $k$ such that $n_k \tilde{x}\in B\setminus \frac{1}{c}B$. The properties of $B$ and of the exponential function imply that $h^{n_k}=\exp(n_k \tilde{x})\in V\setminus U$, and this proves our claim.

\subsection{Proof of Theorem \ref{thm:Lie2} for groups with a minimal metric}
The proof of Theorem \ref{thm:Lie2} for groups with a minimal metric will be based upon two auxiliary results. Recall that we denote by $e$ the identity element of the group $G$.

\begin{lemma}[\cite{Rosendal}]\label{lem:L1}
Let $G$ be a topological group with a minimal metric $d$. Then there exist $a> 0$ and $K \ge 1$ such that, for $f,\,g\in G$ and any positive integer $n$, the following statements are true:

(1) (\emph{A quantitative NSS condition}) If 
$ \max \{ d(g^i,e) : 1\le i \le n\} < a,$
then
\begin{equation}\label{eq:7.1}
d(g,e) \le \frac{1}{n}\cdot
\end{equation}

(2) (\emph{The weak Gleason property}) If 
$ \max \{ d(g^i,e) : 1\le i \le n\} < a,$
then
\begin{equation}\label{eq:7.2}
nd(g,e) \le Kd(g^n,e).
\end{equation}

(3) (\emph{Multiplication is locally Lipschitz}) If 
$ \max \{ d(f,e), d(g,e)\} < a,$
then
\begin{equation}\label{eq:7.2bis}
d(fg,e) \le K\left( d(f,e) + d(g,e)\right).
\end{equation}
\end{lemma}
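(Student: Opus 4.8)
The assertion is, in substance, that a minimal metric is a \emph{weak Gleason metric} in the sense of \cite{Tao5}, so the plan is to recover the three items from the Gleason--Yamabe escape-norm circle of ideas, invoking the minimality of $d$ only at the last step. First I would reduce the statement. Item (3) holds for any left-invariant metric with $K=1$ and no restriction on $a$: by left-invariance $d(fg,f)=d(g,e)$, whence $d(fg,e)\le d(fg,f)+d(f,e)=d(g,e)+d(f,e)$. Item (1) is a consequence of item (2): if $\max_{1\le i\le n}d(g^{i},e)<a$, then (2) yields $n\,d(g,e)\le K\,d(g^{n},e)<Ka$, so $d(g,e)\le 1/n$ provided $a\le 1/K$. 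Thus it suffices to establish (2) for \emph{some} admissible pair $(a_{0},K)$ and then take $a=\min(a_{0},1/K)$. One also records that a group carrying a minimal metric has no small subgroups (\cite{Rosendal}*{p.\,198}).

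To prove (2), I would fix a symmetric open ball $B=B_{d}(e,r_{0})$ containing no nontrivial subgroup and form the associated \emph{escape norm}
\[
\rho(g)=\inf\set{\tfrac1N:\ N\ge1,\ g,g^{2},\dots,g^{N}\in B},
\]
with the convention $\rho(g)=1$ when $g\notin B$; then $\rho(g)\in[0,1]$ and, by NSS, $\rho(g)=0$ iff $g=e$. Two estimates are elementary and use only the triangle inequality and left-invariance of $d$: $\rho$ is symmetric, and $\rho(g)\le\frac{2}{r_{0}}\,d(g,e)$ for every $g$, because $d(g^{i},e)\le i\,d(g,e)$ and hence the orbit of $g$ stays in $B$ for at least $\lfloor r_{0}/d(g,e)\rfloor$ steps. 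The substantive ingredient, which I expect to be the main obstacle, is the \emph{quasi-triangle inequality} for the escape norm: after possibly shrinking $r_{0}$, there is $C\ge1$ with $\rho(gh)\le C\bigl(\rho(g)+\rho(h)\bigr)$ for all $g,h$. This is the classical pigeonhole argument of Gleason and Yamabe (see \cite{Tao5} or \cite{Rosendal}), and it is precisely here that the no-small-subgroups hypothesis enters in an essential way. From it one obtains, in the standard manner, the near-linear growth of $\rho$ along powers: there are $\eps_{0}>0$ and $c>0$ with $\rho(g^{n})\ge c\,\min\bigl(n\,\rho(g),\eps_{0}\bigr)$ for all $g$ and all $n\ge1$.

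Finally, one closes the loop via minimality. The symmetric, left-invariant quasi-norm $\rho$ produces, through $\rho^{*}(g,f)=\rho(g^{-1}f)$ and the standard metrization of quasi-metrics, a left-invariant metric $\partial$ with $\partial\asymp\rho^{*}$; a short verification (using $\rho\le\frac{2}{r_{0}}d$ near $e$ together with the definition of $\rho$) shows $\partial$ compatible with the topology of $G$. Since $d$ is minimal and $\partial$ is a compatible left-invariant metric, the identity $(G,\partial)\to(G,d)$ is Lipschitz near $e$, i.e.\ $d(g,e)\le K_{1}\,\rho(g)$ for $g$ in a neighborhood of $e$. Now if $\max_{1\le i\le n}d(g^{i},e)<a$ with $a\le r_{0}$, then $g,\dots,g^{n}\in B$, so $\rho(g)\le1/n$, and a suitably small choice of $a$ (using $d(g^{n},e)\ge\frac{r_{0}}{2}\rho(g^{n})$ and the growth estimate to exclude the saturated regime $n\rho(g)\ge\eps_{0}$) places us where $\rho(g^{n})\ge c\,n\,\rho(g)$. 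Chaining the inequalities gives
\[
n\,d(g,e)\ \le\ K_{1}\,n\,\rho(g)\ \le\ \frac{K_{1}}{c}\,\rho(g^{n})\ \le\ \frac{2K_{1}}{c\,r_{0}}\,d(g^{n},e),
\]
which is (2) with $K=\max\bigl(1,2K_{1}/(c\,r_{0})\bigr)$; as noted above this also yields (1) after replacing $a$ by $\min(a,1/K)$, and (3) holds for any such $a$ with constant $K\ge1$. The one genuinely hard point in this scheme is the quasi-triangle inequality for the escape norm (equivalently, the local linear growth $\rho(g^{n})\gtrsim n\,\rho(g)$), which is the combinatorial heart of the Gleason--Yamabe theory; everything else is either elementary or a direct appeal to the defining property of a minimal metric.
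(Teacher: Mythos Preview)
The paper does not prove this lemma at all: it simply cites Rosendal's Theorem~3 (conditions (2) and (3)) and Observation~10. Your proposal is therefore far more detailed than what the paper offers, and your reductions are correct and worth recording: item~(3) indeed holds with $K=1$ for \emph{any} left-invariant metric by the triangle inequality, and item~(1) follows from item~(2) once $a\le 1/K$. So only the escape property~(2) needs work, and your escape-norm strategy is the natural one and presumably close in spirit to Rosendal's own argument.

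That said, two points in your plan deserve more care. First, calling the quasi-triangle inequality for the escape norm ``the classical pigeonhole argument of Gleason and Yamabe'' is misleading: Gleason's proof in the locally compact case is a convolution/averaging argument using Haar measure, not pigeonhole, and that machinery is unavailable in Rosendal's general metrizable setting. Whatever Rosendal does here must be different, and you should not suggest the step is routine. Second, and more seriously, there is a potential circularity in your compatibility check for the metric $\partial$ built from $\rho$. You need $\partial$ to be \emph{compatible} with the topology of $G$ before you may invoke minimality of $d$. The direction $\rho\lesssim d$ gives only that $\partial$-balls are $d$-open; for the converse you need that $\rho(g)$ small forces $d(g,e)$ small, i.e.\ that long initial strings $g,g^{2},\dots,g^{N}\in B$ force $g$ close to $e$ in $d$. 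But this is precisely the quantitative NSS condition~(1), which you propose to deduce from~(2). Bare NSS (no nontrivial subgroup in $B$) only handles the case $N=\infty$, and without local compactness there is no obvious limit argument. Your ``short verification'' does not cover this direction, and as written the scheme loops back on itself. Since the paper itself defers entirely to Rosendal, your plan is not \emph{less} complete than the paper's proof, but if you intend it as a self-contained argument you should either establish~(1) independently of~(2), or explain how Rosendal breaks the circularity.
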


\begin{proof}
The results follow from conditions (2) and (3) of \cite{Rosendal}*{Theorem\,3} and from \cite{Rosendal}*{Observation\,10}.
\end{proof}

\begin{lemma}[trapping property]\label{lem:L2}
Let $a$ and $K$ be the constants from Lemma \ref{lem:L1}. Let $b>0$. If $h\in G$, $n$ is a positive integer, and
\begin{equation}\label{eq:7.3}
 \max \{ d(h^i,e) : 1\le i \le n\} < a\quad \textrm{and} \quad
d(h^n,e) < b,
\end{equation}
then
\begin{equation}\label{eq:7.4}
\max \{ d(h^i,e) : 1\le i \le n\} \le \min (a, Kb).
\end{equation}
\end{lemma}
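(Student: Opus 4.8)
The plan is to prove the trapping property (Lemma~\ref{lem:L2}) by combining the three estimates in Lemma~\ref{lem:L1}: a lower bound coming from the weak Gleason property \eqref{eq:7.2}, which gives us the second part $Kb$ of the minimum, and the quantitative NSS condition \eqref{eq:7.1} applied to successive sub-iterates, which gives us the first part $a$. The hypothesis \eqref{eq:7.3} already guarantees that each $d(h^i,e) < a$ for $1 \le i \le n$, so the bound by $a$ in \eqref{eq:7.4} is immediate; the real content is the bound by $Kb$.

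First I would use the weak Gleason property. Since $\max\{d(h^i,e) : 1 \le i \le n\} < a$ by \eqref{eq:7.3}, inequality \eqref{eq:7.2} of Lemma~\ref{lem:L1} applies with $g = h$, giving $n\,d(h,e) \le K\,d(h^n,e) < Kb$, hence $d(h,e) < Kb/n$. More generally, for any fixed $i$ with $1 \le i \le n$, I want to bound $d(h^i,e)$. The idea is to apply the weak Gleason property to the element $h^i$ over an appropriate number of steps: writing $n = qi + s$ with $0 \le s < i$, one has $h^i$ raised to the power $q$ equal to $h^{qi}$, and $d(h^{ji},e) < a$ for every $j \le q$ since $ji \le n$. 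Thus \eqref{eq:7.2} applied to $g = h^i$ with exponent $q$ yields $q\,d(h^i,e) \le K\,d(h^{qi},e)$. Now I need to control $d(h^{qi},e)$ in terms of $d(h^n,e) < b$: since $n - qi = s < i \le n$, the local Lipschitz property \eqref{eq:7.2bis} together with the trivial triangle-type comparisons lets me pass between $h^{qi}$ and $h^n = h^{qi}h^s$, estimating $d(h^{qi},e)$ by a constant times $d(h^n,e) + d(h^s,e)$, both of which are smaller than $\max\{d(h^i,e):1\le i\le n\}$.

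The main obstacle I anticipate is the bookkeeping when $i$ is large compared to $n$ — say $i = n$ itself, where $q = 1$ and the estimate $q\,d(h^i,e) \le K\,d(h^{qi},e)$ degenerates to $d(h^n,e) \le K\,d(h^n,e)$, which is useless. So for indices $i$ close to $n$ one cannot gain a factor; instead the bound must come directly from the hypothesis $d(h^n,e) < b$ (for $i = n$) and, for $i$ slightly below $n$, from reaching $h^n$ from $h^i$ by multiplying by few copies of $h$ and using \eqref{eq:7.2bis} iteratively — but each such step already requires $d(h,e) < a$, which we have. I would therefore organize the argument in two regimes: for $i \le n/2$ (say), use the weak Gleason property with $q \ge 2$ to get a genuine bound, and for $i > n/2$, bound $d(h^i,e)$ by $K$-Lipschitz comparison back to $h^n$ using at most $n/2$ multiplications by $h$, each of which is controlled since $d(h,e) \le Kb/n$ is small. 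Carefully chosen, these two regimes both produce an upper bound of the form $(\text{absolute constant}) \cdot K \cdot b$; since Lemma~\ref{lem:L2} only asserts the bound $Kb$, I would need to be slightly more economical in the constants — most likely by noting that the weak Gleason property \eqref{eq:7.2} is already stated with the single constant $K$ and that the monotonicity $d(h^i,e) \le d(h^n,e) \cdot (\text{something} \le 1)$ can be extracted directly, so that in fact $d(h^i,e) \le K\,d(h^n,e) < Kb$ holds for every $i \le n$ after applying \eqref{eq:7.2} to $h^i$ over $\lfloor n/i\rfloor \ge 1$ steps and absorbing the remainder into the already-present factor $K$. Combining with the trivial bound by $a$ gives \eqref{eq:7.4}.
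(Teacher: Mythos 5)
Your first step is correct and matches the paper: since $\max\{d(h^i,e): 1\le i\le n\}<a$, the weak Gleason property \eqref{eq:7.2} applied to $g=h$ gives $n\,d(h,e)\le K\,d(h^n,e)<Kb$, hence $d(h,e)<Kb/n$. But the way you then propose to bound $d(h^i,e)$ for general $i$ has a genuine gap. Applying \eqref{eq:7.2} to $g=h^i$ with exponent $q=\lfloor n/i\rfloor$ gives nothing whenever $q=1$, i.e.\ for every $i>n/2$, as you yourself observe; and your fallback for that regime --- comparing $h^i$ to $h^n$ by repeatedly invoking the local Lipschitz estimate \eqref{eq:7.2bis} --- accumulates a factor of $K$ at each of the up to $n/2$ multiplications, so it does not yield a bound independent of $n$, let alone the bound $Kb$. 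Your closing suggestion to ``absorb the remainder into the already-present factor $K$'' is not a justified step: the constant $K$ in \eqref{eq:7.2} is fixed by Lemma \ref{lem:L1} and cannot absorb an extra multiplicative loss. As written, the argument at best produces $C\cdot Kb$ for some constant $C>1$, which is not the assertion \eqref{eq:7.4}.

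The missing idea is much more elementary, and you already have all the ingredients. By left-invariance of $d$ one has $d(h^j,h^{j-1})=d(h^{j-1}h,h^{j-1}e)=d(h,e)$ for every $j$, so the triangle inequality gives $d(h^i,e)\le\sum_{j=1}^{i}d(h^j,h^{j-1})=i\,d(h,e)\le iKb/n\le Kb$ for every $1\le i\le n$. Combined with the hypothesis $\max\{d(h^i,e):1\le i\le n\}<a$ from \eqref{eq:7.3}, this is exactly \eqref{eq:7.4}; this one-line estimate is the paper's proof. Neither \eqref{eq:7.1} nor \eqref{eq:7.2bis} is needed.
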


\begin{proof}
Suppose that $h$ is an element of $G$ such that
\eqref{eq:7.3} is true.
Using \eqref{eq:7.2}, we obtain that
\begin{align*}
d(h,e) \le \frac{Kd(h^n,e)}{n} \le \frac{Kb}{n}\le Kb.
\end{align*}
  Suppose that $n\ge 2$.
Since $d$ is a left-invariant metric, we obtain
 \begin{align*}
d(h^2,e) \le d(h^2,h) + d(h,e) = 2 d(h,e) \le  \frac{2Kb}{n}\le Kb.
\end{align*}
A similar proof shows that 
$$ d(h^i,e) \le \frac{iKb}{n}\le Kb$$
for every $1\le i\le n$, which proves Lemma \ref{lem:L2}.
\end{proof}

  \begin{proof}[Proof of Theorem \ref{thm:Lie2} for groups with a minimal metric]
Let $G$ be a group with a minimal metric $d$. Suppose that $n_{k+1}/n_k \le c$. Without loss of any generality we can assume that $c$ is a positive integer. 

Let now $x\in G$ be such that 
\begin{equation}\label{eq:7.5}
\sup \{ d(x^{n_j},e) : j \ge 1\} < \delta := \frac{a}{2K(K+c)}\cdot
\end{equation}
Thus all the $n_j$-powers of $x$ belong to the neighborhood $B_d(e,\delta)$.
We want to show that $x=e$. Set $W= B_d(e,K\delta)$. We will prove by induction the following claim:
 \begin{align}\label{eq:recurr}
\textrm{ For every integer } k \ge 0, \textrm{ all the elements } x, x^2, x^3, \cdots, x^{n_k} \textrm{ belong to } W. 
\end{align}

This is surely true for $k=0$ since $d(x,e) < \delta \le K\delta$. Suppose that \eqref{eq:recurr} is true for a fixed $k\ge 0$ and consider an integer $i$ such that $n_k < i \le n_{k+1}$. Then we can write $i$ as $i = qn_k + r$ for some positive integer $q$ and for some integer $r$ with $0\le r < n_k$. We have 
$$ q = \frac{i-r}{n_k} \le \frac{n_{k+1}-r}{n_k} \le c .$$
Using that multiplication is locally Lipschitz, \emph{i.e.} the condition  (\ref{eq:7.2bis}), and the induction hypothesis, we obtain
 \begin{align*}
d(x^i,e) = d(x^{qn_k}x^r,e) \le K\left( qd(x^{n_k},e)+d(x^r,e)\right) \le K\left(c\delta + K\delta\right) = \frac{a}{2} < a
\end{align*}
(we have $d(x^{qn_k},e) \le qd(x^{n_k},e) <c\varepsilon<a$, so that \eqref{eq:7.2bis} can be applied). 
As a particular case of \eqref{eq:7.5} we have $d(x^{n_{k+1}},e) < \delta$. It follows from the trapping property of Lemma \ref{lem:L2} that 
$$\max \{ d(x^i,e) : 1\le i \le n_{k+1}\} \le K\delta.$$
Therefore all the elements $x, x^2, x^3, \cdots, x^{n_{k+1}}$ belong to $W$. By induction, the property \eqref{eq:recurr} holds for every $k$. Therefore the neighborhood $W$ of $e$ contains all the elements $x^p$, $p\ge 0$. Since a topological group with a minimal metric is NSS by condition (1) of Lemma \ref{lem:L1}, we obtain that $x=e$, which completes the proof. 
   \end{proof}

\begin{remark}
Constructing weak Gleason metrics for NSS locally compact groups is an essential step in proving that every NSS locally compact group is isomorphic to a Lie group. The condition \eqref{eq:7.2} is the so-called escape property of weak Gleason metrics, as introduced in \cite{Tao5}*{p.\,103 ff}. It is proved in \cite{Tao5}*{Theorem\,5.2.1} that, in the locally compact setting, every weak Gleason metric is actually Gleason, meaning that it satisfies a further
estimate on commutators. 
\end{remark}

\section{Proofs of Theorems \ref{thm:DePrimaIN}, \ref{thm:DePrima} and of Corollary \ref{cor:nagisa}}\label{Sec6}

Since the proofs of the two theorems are similar, we only give the proof of Theorem \ref{thm:DePrima}. 

\begin{proof}[Proof of Theorem \ref{thm:DePrima}]
Let us first prove that the statements (b) and (c) of Theorem \ref{thm:DePrima} are equivalent. Suppose that (b) is true, that is, every complex number $c$ with $\Re c^{n_k} \ge 0$ for every $k\ge 0$ satisfies $c \ge 0$. Let $\ld \in \T$ be such that $|\ld^{n_k} - 1| \le \sqrt{2}$ for every $k$. Then 
$2\Re (\ld^{n_k}) = 2 - |\ld^{n_k} - 1|^2 \ge 0$
for every $k$. Using (b), we obtain that $\ld$ is a positive real number. As $|\ld| = 1$, $\ld = 1$. 

Suppose now that (c) is true. Let $c$ be a non-zero complex number such that $\Re c^{n_k} \ge 0$ for every $k\ge 0$. Let $\ld = c/|c|$. Then $|\ld| = 1$ and $\Re (\ld^{n_k}) \ge 0$. Therefore
$|\ld^{n_k} - 1|^2 = 2 - 2\Re (\ld^{n_k}) \le 2.$
Using (c) we get $\ld = 1$ and thus $c = |c|$ is a nonnegative real number. 

Clearly (a) implies (b). In order to show that (b) implies (a), we need to introduce some notation and to recall some results about the numerical range and fractional powers of operators. These results go back to a $1962$ paper by Matsaev and Palant \cite{MaPa}, see also \cites{LiRoSpi,BleWa} and the references therein.
\renewcommand{\qedsymbol}{}
\end{proof}

\begin{proof}[An interlude on fractional powers of operators]
\par\smallskip
Recall that the numerical range of the operator $T$ is the set
$$ W(T) = \left\{ \ps{Tx}{x} : \|x\|=1 \right\} $$
and that the closure of $W(T)$ always contains the spectrum $\sigma(T)$ of $T$. What we will need in the proof that (b) implies (a) are the following results, which follow for instance from Theorems 1.2 and 2.8 from \cite{LiRoSpi} (this paper deals with fractional powers of elements of more general Banach algebras, with or without an involution). Let $T\in \B{H}$ be a Hilbert space operator such that $W(T)$ does not contain any negative real number. Note that, by convexity of the numerical range, this implies that $W(T)$ is included in a certain sector centered in the origin and of opening no greater than $\pi$. Let $m\ge 2$ be an integer. 

(\emph{Existence}) Under the above hypotheses, $T$ has a $m$-root, $S$, in $\B{H}$ such that $S^m = T$ and the numerical range of $S$ lives inside the sector $\Sigma(\pi/m)$ centered in the origin and of opening $2\pi/m$, namely
$$W(S) \subset \Sigma(\pi/m) := \left\{re^{i\theta} : r \ge 0, \, \abs{\theta} \le \frac{\pi}{m}\right\}.$$ 

(\emph{Unicity}) Moreover, if $R\in \B{H}$ is another operator such that $R^m = T$ and $\sigma(R)\subset\Sigma(\pi/m)$, then $R=S$. 
\renewcommand{\qedsymbol}{}
\end{proof}
We shall use the notation $T^{1/m}$ for the unique $m$-root of $T$.
\par\medskip
Let us now go back to the proof of Theorem \ref{thm:DePrima}. 

\begin{proof}[Proof of Theorem \ref{thm:DePrima}, continued] Assume that (b) holds true and let $A$ be a Hilbert space operator such that $\Re A^{n_k} \ge 0$ for all $k\ge 0$. Then the numerical range of $A^{n_k}$ is in the closed right half-plane 
$\overline{\C}_{+} = \{z\in \C : \Re z \ge 0\} .$
In particular, $W(A^{n_k})$ does not contain any negative real number. 

Let $c\in\sigma(A)$. Then, by the spectral mapping theorem, $c^{n_k}\in\sigma(A^{n_k})$, so $c^{n_k}$ belongs to the closure of $W(A^{n_k})$. Thus $\Re c^{n_k} \ge 0$ for every $k$. Therefore (b) implies that $\sigma(A)\subset [0,+\infty)$; in particular, the spectrum of $A$ is included in all the sectors $\Sigma(\pi/(n_k))$.
By the unicity result of fractional powers we have 
$$(A^{n_k})^{1/n_k} = A \quad \textrm{for every } k\ge 0.$$
By the existence part we obtain 
$W(A) \subset \Sigma(\pi/n_k)$ for every $k\ge 0$.
Therefore $W(A)$ is a subset of the positive real axis. Thus $A$ is self-adjoint, and in fact positive since $\Re A \ge 0$. This completes the proof.
\end{proof}

Some generalizations are possible by considering a sequence of sectors, $S(n)$, centered in the origin and replacing (b) by the condition
``$c^{n_k} \in S(n_k)$
for every $k$''. The situation considered in Theorem~\ref{thm:DePrima} corresponds to the case where all the sectors $S(n)$
are the closed right half-plane $\overline{\C}_{+} := \{z\in \C : \Re z \ge 0\}$. This problem, as well as the corresponding vectorial problem (a), have been addressed for matrices in several papers \cites{HaHe,HeSch1,HeSch2}. We decided not to pursue this topic here.

We also obtain the following corollary, generalizing \cite{NagWa}*{Theorem\,1}.

\begin{corollary}\label{cor:nagisa}
Let $(n_k)_{k\ge 0}$ be a sequence of integers with $n_0 = 1$ such that the only complex number $\ld \in \T$ with $\sup_{k\ge 0} \abs{\ld^{n_k}-1} \le \sqrt{2}$ is $\ld = 1$. Then, for a Hilbert space operator $A\in \B{H}$, the following two statements are equivalent:
\begin{itemize}
\item[(i)] $\sup_{k\ge 0} \norme{A^{n_k}-I} \le 1$;
\item[(ii)] $0\le A \le I$.
\end{itemize} 
\end{corollary}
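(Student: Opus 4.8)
The plan is to deduce this from Theorem \ref{thm:DePrima} together with a self-adjointness/contractivity argument. First I would show (ii) $\Rightarrow$ (i): if $0 \le A \le I$, then $A$ is self-adjoint with $\sigma(A) \subset [0,1]$, so each $A^{n_k}$ is self-adjoint with $\sigma(A^{n_k}) \subset [0,1]$, hence $\|A^{n_k} - I\| = \sup_{t \in \sigma(A)} |t^{n_k} - 1| \le 1$ for every $k$. This direction is routine.

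For (i) $\Rightarrow$ (ii), suppose $\sup_{k\ge 0}\|A^{n_k} - I\| \le 1$. The key observation is that $\|A^{n_k} - I\| \le 1$ forces $\Re A^{n_k} \ge 0$: indeed for any unit vector $x$, writing $z = \ps{A^{n_k}x}{x}$, convexity of the numerical range gives $z \in \overline{W(A^{n_k})}$, and $\|A^{n_k} - I\| \le 1$ gives $|z - 1| \le 1$, so $\Re z = 1 - \tfrac{1}{2}(|z-1|^2 + 1 - |z|^2)$... more directly, $|z-1|\le 1$ implies $\Re z \ge \tfrac{1}{2}|z|^2 \ge 0$. Hence $\Re A^{n_k} \ge 0$ for all $k$. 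The hypothesis on $(n_k)$ is exactly condition (c) of Theorem \ref{thm:DePrima}, so by that theorem $A$ is a positive operator; in particular $A$ is self-adjoint and $\sigma(A) \subset [0,+\infty)$. It then remains to show $A \le I$, i.e. $\sigma(A) \subset [0,1]$. Since $A$ is self-adjoint, $\|A^{n_k} - I\| = \sup_{t \in \sigma(A)} |t^{n_k} - 1|$; if some $t \in \sigma(A)$ satisfied $t > 1$ then $t^{n_k} \to +\infty$, contradicting $|t^{n_k} - 1| \le 1$ for all $k$ (already $k$ with $n_k \ge 1$, e.g. $n_0 = 1$ gives $|t - 1| \le 1$ so $t \le 2$, and then $n_1 \ge 2$ pushes it further; in the limit $t \le 1$). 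Thus $\sigma(A) \subset [0,1]$, which gives $0 \le A \le I$.

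The main obstacle — really the only non-bookkeeping point — is recognizing that the scalar inequality $|z - 1| \le 1$ on the numerical range is equivalent to accretivity $\Re z \ge 0$ together with a bound, and cleanly extracting from $\|A^{n_k}-I\|\le 1$ both the accretivity needed to invoke Theorem \ref{thm:DePrima} and, afterwards, the upper bound $A \le I$ once self-adjointness is known. Once self-adjointness is in hand everything reduces to the scalar statement $\{t \ge 0 : \sup_k |t^{n_k} - 1| \le 1\} = [0,1]$, which is immediate since $t \mapsto t^{n_k}$ is increasing on $[0,\infty)$ and $n_k \to \infty$.
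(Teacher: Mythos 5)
Your proposal is correct and follows essentially the same route as the paper: extract $\Re A^{n_k}\ge 0$ from $\|A^{n_k}-I\|\le 1$ via the numerical-range inequality $|z-1|\le 1\Rightarrow \Re z\ge |z|^2/2$, invoke the positivity theorem, and then bound the spectrum to get $A\le I$. (In fact your citation of Theorem~\ref{thm:DePrima} is the right one for the hypothesis $\sup_k|\lambda^{n_k}-1|\le\sqrt{2}\Rightarrow\lambda=1$ and the non-strict accretivity $\Re A^{n_k}\ge 0$; the paper's text points to Theorem~\ref{thm:DePrimaIN}, which appears to be a slip.)
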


\begin{proof}
If $0\le A \le I$ then $0\le A^n \le I$ for every $n\ge 1$ and thus $\|I-A^n\| \le 1$ for every $n\ge 1$. 

Suppose now that $\|I-A^{n_k}\| \le 1$ for every $k\ge 1$. Let $x\in H$ be a unit vector. We have
\[
\abs{1-\ps{A^{n_k}x}{x}} =  \abs{\ps{(I-A^{n_k})x}{x}} 
 \le \norme{I-A^{n_k}}\norme{x}^2
 \le 1.
\]
This implies that $z= \ps{A^{n_k}x}{x}$ satisfies $\Re(z) \ge \abs{z}^2/2 \ge 0$. Therefore the numerical range of $A^{n_k}$ is in the right closed half-plane for every $k\ge 1$. Using Theorem \ref{thm:DePrimaIN} we obtain that $A$ is (self-adjoint and) positive. 

Let now $\ld \in \sigma(A)$. Then $\ld \ge 0$ and $\ld^{n_k} - 1$ is in $\sigma(A^{n_k} - I)$ for all $k$. Therefore
$ \abs{\ld^{n_k} - 1} \le \|A^{n_k} - I\| \le 1.$
This implies $\abs{\ld^{n_k}} \le 2$ for all $k$ ; hence $\abs{\ld} \le 1$. Thus $\sigma(A) \subset [0,1]$ and $0\le A \le I$.  
\end{proof}

\smallskip

Corollary \ref{cor:nagisa} applies for instance to every Jamison sequence with Jamison constant $<\sqrt{2}$.

\smallskip

\noindent {\bf Acknowledgments.} The revised version of this manuscript was prepared while the first-named author was a member of the Max Planck Institute for Mathematics in Bonn; he would like to thank the MPI for wonderful working conditions. The authors are grateful to the two anonymous referees, who furnished several constructive remarks and numerous suggestions for improving the paper's exposition. 
\smallskip

\begin{bibdiv}
  \begin{biblist}
  
  \bib{BaGr}{article}{
   author={Badea, Catalin},
   author={Grivaux, Sophie},
   title={Unimodular eigenvalues, uniformly distributed sequences and linear
   dynamics},
   journal={Adv. Math.},
   volume={211},
   date={2007},
   number={2},
   pages={766--793},
   issn={0001-8708},
   review={\MR{2323544}},
}
  
  \bib{BaGr2}{article}{
   author={Badea, Catalin},
   author={Grivaux, Sophie},
   title={Size of the peripheral point spectrum under power or resolvent
   growth conditions},
   journal={J. Funct. Anal.},
   volume={246},
   date={2007},
   number={2},
   pages={302--329},
   issn={0022-1236},
   review={\MR{2321045}},
}

\bib{BaGrsmf}{article}{
   author={Badea, Catalin},
   author={Grivaux, Sophie},
   title={Sets of integers determined by operator-theoretical properties: Jamison and Kazhdan sets in the group $\mathbb{Z}$},
   conference={
      title={SMF16 : Premier congr\`es de la Soci\'et\'e Math\'ematique de France },
   },
   book={
      series={S\'emin. Congr.},
      volume={31},
      publisher={Soc. Math. France, Paris},
   },
   date={2017},
   pages={39--77},
   review={},
}
  
  \bib{BeidCox}{article}{
   author={Beidleman, J. C.},
   author={Cox, R. H.},
   title={Topological near-rings},
   journal={Arch. Math. (Basel)},
   volume={18},
   date={1967},
   pages={485--492},
   issn={0003-889X},
   review={\MR{0227234}},
}

\bib{BeNe}{article}{
   author={Belti\c t\u a, Daniel},
   author={Neeb, Karl-Hermann},
   title={Finite-dimensional Lie subalgebras of algebras with continuous
   inversion},
   journal={Studia Math.},
   volume={185},
   date={2008},
   number={3},
   pages={249--262},
   issn={0039-3223},
   review={\MR{2391020}},
}

\bib{BleWa}{article}{
   author={Blecher, David P.},
   author={Wang, Zhenhua},
   title={Roots in operator and Banach algebras},
   journal={Integral Equations Operator Theory},
   volume={85},
   date={2016},
   number={1},
   pages={63--90},
   issn={0378-620X},
   review={\MR{3503179}},
}

\bib{BonDun}{book}{
   author={Bonsall, Frank F.},
   author={Duncan, John},
   title={Complete normed algebras},
   note={Ergebnisse der Mathematik und ihrer Grenzgebiete, Band 80},
   publisher={Springer-Verlag, New York-Heidelberg},
   date={1973},
   pages={x+301},
   review={\MR{0423029}},
}

\bib{Bourbaki}{book}{
   author={Bourbaki, Nicolas},
   title={Lie groups and Lie algebras. Chapters 1--3},
   series={Elements of Mathematics},
   note={Translated from the French;
   Reprint of the 1975 edition},
   publisher={Springer-Verlag, Berlin},
   date={1989},
   pages={xviii+450},
   isbn={3-540-50218-1},
   review={\MR{979493}},
}

\bib{Segal}{book}{
   author={Carter, Roger},
   author={Segal, Graeme},
   author={Macdonald, Ian},
   title={Lectures on Lie groups and Lie algebras},
   series={London Mathematical Society Student Texts},
   volume={32},
   publisher={Cambridge University Press, Cambridge},
   date={1995},
   pages={vii+190},
   isbn={0-521-49579-2},
   isbn={0-521-49922-4},
   review={\MR{1356712}},
   doi={10.1017/CBO9781139172882},
}

\bib{Chernoff}{article}{
   author={Chernoff, Paul R.},
   title={Elements of a normed algebra whose $2^{n}{\rm th}$ powers lie
   close to the identity},
   journal={Proc. Amer. Math. Soc.},
   volume={23},
   date={1969},
   pages={386--387},
   issn={0002-9939},
   review={\MR{0246122}},
}

\bib{Cox}{article}{
   author={Cox, R.H.},
title={Matrices all of whose powers lie close the identity (Abstract)},
   journal={Amer. Math. Monthly},
   volume={73},
   date={1966},
   pages={813},
}

\bib{Laub}{article}{
   author={deLaubenfels, Ralph},
   title={Totally accretive operators},
   journal={Proc. Amer. Math. Soc.},
   volume={103},
   date={1988},
   number={2},
   pages={551--556},
   issn={0002-9939},
   review={\MR{943083}},
}

\bib{DPR}{article}{
   author={DePrima, C. R.},
   author={Richard, B. K.},
   title={A characterization of the positive cone of ${\bf B}({\germ h})$},
   journal={Indiana Univ. Math. J.},
   volume={23},
   date={1973/74},
   pages={163--172},
   issn={0022-2518},
   review={\MR{0315503}},
}

\bib{Dev}{article}{
   author={Devinck, Vincent},
   title={Universal Jamison spaces and Jamison sequences for
   $C_0$-semigroups},
   journal={Studia Math.},
   volume={214},
   date={2013},
   number={1},
   pages={77--99},
   issn={0039-3223},
   review={\MR{3043414}},
}

\bib{EiGr}{article}{
   author={Eisner, Tanja},
   author={Grivaux, Sophie},
   title={Hilbertian Jamison sequences and rigid dynamical systems},
   journal={J. Funct. Anal.},
   volume={261},
   date={2011},
   number={7},
   pages={2013--2052},
   issn={0022-1236},
   review={\MR{2822322}},
}


\bib{Gel}{article}{
   author={Gelfand, I.},
   title={Zur Theorie der Charaktere der Abelschen topologischen Gruppen},
   language={German, with Russian summary},
   journal={Rec. Math. [Mat. Sbornik] N. S.},
   volume={9 (51)},
   date={1941},
   pages={49--50},
   review={\MR{0004635}},
}

\bib{Gleason}{article}{
   author={Gleason, Andrew M.},
   title={Groups without small subgroups},
   journal={Ann. of Math. (2)},
   volume={56},
   date={1952},
   pages={193--212},
   issn={0003-486X},
   review={\MR{0049203}},
}

\bib{GWZ}{article}{
   author={Gomilko, Alexander},
   author={Wr\'obel, Iwona},
   author={Zem\'anek, Jaroslav},
   title={Numerical ranges in a strip},
   conference={
      title={Operator theory 20},
   },
   book={
      series={Theta Ser. Adv. Math.},
      volume={6},
      publisher={Theta, Bucharest},
   },
   date={2006},
   pages={111--121},
   review={\MR{2276936}},
}

\bib{Gor}{article}{
   author={Gorin, E. A.},
   title={Several remarks in connection with Gel\cprime fand's theorems on the
   group of invertible elements of a Banach algebra},
   language={Russian},
   journal={Funkcional. Anal. i Prilo\v zen.},
   volume={12},
   date={1978},
   number={1},
   pages={70--71},
   issn={0374-1990},
   review={\MR{0482227}},
}

\bib{GotoYamabe}{article}{
   author={Got\^o, Morikuni},
   author={Yamabe, Hidehiko},
   title={On some properties of locally compact groups with no small
   subgroup},
   journal={Nagoya Math. J.},
   volume={2},
   date={1951},
   pages={29--33},
   issn={0027-7630},
   review={\MR{0041864}},
}

\bib{HaHe}{article}{
   author={Hanoch, Gershon},
   author={Hershkowitz, Daniel},
   title={Forcing sequences of positive integers},
   journal={Czechoslovak Math. J.},
   volume={45(120)},
   date={1995},
   number={1},
   pages={149--169},
   issn={0011-4642},
   review={\MR{1314537}},
}

\bib{HeSch1}{article}{
   author={Hershkowitz, Daniel},
   author={Schneider, Hans},
   title={Matrices with a sequence of accretive powers},
   journal={Israel J. Math.},
   volume={55},
   date={1986},
   number={3},
   pages={327--344},
   issn={0021-2172},
   review={\MR{876399}},
}

\bib{HeSch2}{article}{
   author={Hershkowitz, Daniel},
   author={Schneider, Hans},
   title={Sequences, wedges and associated sets of complex numbers},
   journal={Czechoslovak Math. J.},
   volume={38(113)},
   date={1988},
   number={1},
   pages={138--156},
   issn={0011-4642},
   review={\MR{925947}},
}

\bib{Hirschfeld}{article}{
   author={Hirschfeld, R. A.},
   title={On semi-groups in Banach algebras close to the identity},
   journal={Proc. Japan Acad.},
   volume={44},
   date={1968},
   pages={755},
   issn={0021-4280},
   review={\MR{0239417}},
}
	
\bib{HofmannMorrisEMS}{book}{
   author={Hofmann, Karl H.},
   author={Morris, Sidney A.},
   title={The Lie theory of connected pro-Lie groups},
   series={EMS Tracts in Mathematics},
   volume={2},
   publisher={European Mathematical Society (EMS), Z\"urich},
   date={2007},
   pages={xvi+678},
   isbn={978-3-03719-032-6},
   review={\MR{2337107}},
}

\bib{Jam}{article}{
   author={Jamison, Benton},
   title={Eigenvalues of modulus $1$},
   journal={Proc. Amer. Math. Soc.},
   volume={16},
   date={1965},
   pages={375--377},
   issn={0002-9939},
   review={\MR{0176332}},
}

\bib{John}{article}{
   author={Johnson, Charles R.},
   title={Powers of matrices with positive definite real part},
   journal={Proc. Amer. Math. Soc.},
   volume={50},
   date={1975},
   pages={85--91},
   issn={0002-9939},
   review={\MR{0369395}},
}

\bib{KMOT}{article}{
   author={Kalton, N.},
   author={Montgomery-Smith, S.},
   author={Oleszkiewicz, K.},
   author={Tomilov, Y.},
   title={Power-bounded operators and related norm estimates},
   journal={J. London Math. Soc. (2)},
   volume={70},
   date={2004},
   number={2},
   pages={463--478},
   issn={0024-6107},
   review={\MR{2078905}},
}

\bib{Kaplansky}{book}{
   author={Kaplansky, Irving},
   title={Lie algebras and locally compact groups},
   series={Chicago Lectures in Mathematics},
   note={Reprint of the 1974 edition},
   publisher={University of Chicago Press, Chicago, IL},
   date={1995},
   pages={xii+148},
   isbn={0-226-42453-7},
   review={\MR{1324106}},
}

\bib{LiRoSpi}{article}{
   author={Li, Chi-Kwong},
   author={Rodman, Leiba},
   author={Spitkovsky, Ilya M.},
   title={On numerical ranges and roots},
   journal={J. Math. Anal. Appl.},
   volume={282},
   date={2003},
   number={1},
   pages={329--340},
   issn={0022-247X},
   review={\MR{2000347}},
}
	
\bib{LumVal}{article}{
   author={Luminet, Denis},
   author={Valette, Alain},
   title={Faithful uniformly continuous representations of Lie groups},
   journal={J. London Math. Soc. (2)},
   volume={49},
   date={1994},
   number={1},
   pages={100--108},
   issn={0024-6107},
   review={\MR{1253015}},
}
	
\bib{MaPa}{article}{
   author={Matsaev, V. I.},
   author={Palant, Ju. A.},
   title={On the powers of a bounded dissipative operator},
   language={Russian},
   journal={Ukrain. Mat. \v Z.},
   volume={14},
   date={1962},
   pages={329--337},
   issn={0041-6053},
   review={\MR{0146664}},
}

\bib{MT}{article}{
   author={Michor, Peter},
   author={Teichmann, Josef},
   title={Description of infinite-dimensional abelian regular Lie groups},
   journal={J. Lie Theory},
   volume={9},
   date={1999},
   number={2},
   pages={487--489},
   issn={0949-5932},
   review={\MR{1718235}},
}

\bib{MontgomeryZippin}{book}{
   author={Montgomery, Deane},
   author={Zippin, Leo},
   title={Topological transformation groups},
   note={Reprint of the 1955 original},
   publisher={Robert E. Krieger Publishing Co., Huntington, N.Y.},
   date={1974},
   pages={xi+289},
   review={\MR{0379739}},
}

\bib{MorrisPestov}{article}{
   author={Morris, Sidney A.},
   author={Pestov, Vladimir},
   title={On Lie groups in varieties of topological groups},
   journal={Colloq. Math.},
   volume={78},
   date={1998},
   number={1},
   pages={39--47},
   issn={0010-1354},
   review={\MR{1658131}},
}

\bib{NagWa}{article}{
   author={Nagisa, Masaru},
   author={Wada, Shuhei},
   title={Averages of operators and their positivity},
   journal={Proc. Amer. Math. Soc.},
   volume={126},
   date={1998},
   number={2},
   pages={499--506},
   issn={0002-9939},
   review={\MR{1415335}},
}

\bib{NakamuraYoshida}{article}{
   author={Nakamura, Masahiro},
   author={Yoshida, Midori},
   title={On a generalization of a theorem of Cox},
   journal={Proc. Japan Acad.},
   volume={43},
   date={1967},
   pages={108--110},
   issn={0021-4280},
   review={\MR{0217619}},
}

\bib{Neeb}{article}{
   author={Neeb, Karl-Hermann},
   title={Lectures on Infinite Dimensional Lie Groups},
   journal={lecture notes, Monastir Summer School, available at https://cel.archives-ouvertes.fr/cel-00391789/document},
   date={2005},   
}

\bib{Paulos}{article}{
   author={Paulos, J.},
   title={Stability of Jamison sequences under certain perturbations},
   journal={North-W. Eur. J. of Math.},
   volume={5},
   date={2019},
   pages={89--99},
}

\bib{Ran}{article}{
   author={Ransford, Thomas},
   title={Eigenvalues and power growth},
   journal={Israel J. Math.},
   volume={146},
   date={2005},
   pages={93--110},
   issn={0021-2172},
   review={\MR{2151595}},
}

\bib{RR}{article}{
   author={Ransford, Thomas},
   author={Roginskaya, Maria},
   title={Point spectra of partially power-bounded operators},
   journal={J. Funct. Anal.},
   volume={230},
   date={2006},
   number={2},
   pages={432--445},
   issn={0022-1236},
   review={\MR{2186219}},
}

\bib{Rosendal}{article}{
   author={Rosendal, Christian},
   title={Lipschitz structure and minimal metrics on topological groups},
   journal={Ark. Mat.},
   volume={56},
   date={2018},
   number={1},
   pages={185--206},
   issn={0004-2080},
   review={\MR{3800465}},
}

\bib{Shiu}{article}{
   author={Shiu, Elias S. W.},
   title={Growth of numerical ranges of powers of Hilbert space operators},
   journal={Michigan Math. J.},
   volume={23},
   date={1976},
   number={2},
   pages={155--160},
   issn={0026-2285},
   review={\MR{0412845}},
}

\bib{Tao5}{book}{
   author={Tao, Terence},
   title={Hilbert's fifth problem and related topics},
   series={Graduate Studies in Mathematics},
   volume={153},
   publisher={American Mathematical Society, Providence, RI},
   date={2014},
   pages={xiv+338},
   isbn={978-1-4704-1564-8},
   review={\MR{3237440}},
}
	
\bib{Uch}{article}{
   author={Uchiyama, Mitsuru},
   title={Powers and commutativity of selfadjoint operators},
   journal={Math. Ann.},
   volume={300},
   date={1994},
   number={4},
   pages={643--647},
   issn={0025-5831},
   review={\MR{1314739}},
}

\bib{Wallen}{article}{
   author={Wallen, Lawrence J.},
   title={On the magnitude of $x^{n}-1$ in a normed algebra},
   journal={Proc. Amer. Math. Soc.},
   volume={18},
   date={1967},
   pages={956},
   issn={0002-9939},
   review={\MR{0216295}},
}

\bib{Wils}{article}{
   author={Wils, W.},
   title={On semigroups near the identity},
   journal={Proc. Amer. Math. Soc.},
   volume={21},
   date={1969},
   pages={762--763},
   issn={0002-9939},
   review={\MR{0239468}},
}
	
\bib{Zem}{article}{
   author={Zem\'anek, Jaroslav},
   title={On the Gelfand-Hille theorems},
   conference={
      title={Functional analysis and operator theory},
      address={Warsaw},
      date={1992},
   },
   book={
      series={Banach Center Publ.},
      volume={30},
      publisher={Polish Acad. Sci. Inst. Math., Warsaw},
   },
   date={1994},
   pages={369--385},
   review={\MR{1285622}},
}
  \end{biblist}
\end{bibdiv}

\end{document}